%% LyX 2.0.1 created this file.  For more info, see http://www.lyx.org/.
%% Do not edit unless you really know what you are doing.
%%%    Submitted to ArXiv 30 august 2016
%%%%  Submitted for publication August 28 2016 
%Final version August 2016
%%%%    Corrections July 2016
\documentclass[oneside,english]{amsart}
\usepackage[T1]{fontenc}
\usepackage[latin9]{inputenc}
\usepackage{color}
\usepackage{units}
\usepackage{textcomp}
\usepackage{amsthm}
\usepackage{amssymb}
\usepackage{esint}

\makeatletter
%%%%%%%%%%%%%%%%%%%%%%%%%%%%%% Textclass specific LaTeX commands.
\numberwithin{equation}{section}
\numberwithin{figure}{section}
\theoremstyle{plain}
\newtheorem{thm}{\protect\theoremname}
  \theoremstyle{plain}
  \newtheorem{prop}[thm]{\protect\propositionname}
  \theoremstyle{plain}
  \newtheorem{lem}[thm]{\protect\lemmaname}
  \theoremstyle{plain}
  \newtheorem{cor}[thm]{\protect\corollaryname}
  \theoremstyle{plain}
  \newtheorem*{prop*}{\protect\propositionname}
  \theoremstyle{remark}
  \newtheorem{rem}[thm]{\protect\remarkname}

\makeatother

\usepackage{babel}
  \providecommand{\corollaryname}{Corollary}
  \providecommand{\lemmaname}{Lemma}
  \providecommand{\propositionname}{Proposition}
  \providecommand{\remarkname}{Remark}
\providecommand{\theoremname}{Theorem}

%\begin{document}
\global\long\def\SL#1{SL\left(#1,\mathbb{R}\right)}
\global\long\def\norm#1{\left\Vert #1\right\Vert }

\global\long\def\g{\mathfrak{g}}

\global\long\def\mr{\mathbb{R}}

\global\long\def\ma{\mathfrak{a}}

\global\long\def\gng{\nicefrac{G}{\Gamma}}

\global\long\def\hc{\mathcal{H}}

\global\long\def\lkgk{L^{1}\left(K\backslash G/K\right)}
\global\long\def\Vol{\mbox{vol}}
\global\long\def\dm{\mbox{diag}}

\begin{document}
\title[Lattice points in higher rank groups]{Counting Lattice Points in norm balls \\ on higher rank 
 simple Lie groups}
\author{ Alexander Gorodnik, Amos Nevo and Gal Yehoshua} 
\address{School of Mathematics, University of Bristol, Bristol UK }
\email{a.gorodnik@bristol.ac.uk}
\address{Department of Mathematics, Technion IIT, Israel}
\email{anevo@tx.technion.ac.il}
\address{Department of Mathematics, Technion IIT, Israel}
\email{gal.yehoshua@gmail.com}

\date{\today}
\subjclass[2000]{37A17, 11K60}
\keywords{simple Lie group, lattice points, spectral gap, spherical functions, Gelfand pairs, finite-dimensional representations, highest weight.}
\thanks{The first author acknowledges support of ERC grant 239606. The second author acknowledges
  support of ISF grant 2095/15.}

\begin{abstract}
We establish an error estimate for counting lattice points in Euclidean norm balls (associated to an arbitrary irreducible linear representation) for lattices in simple Lie groups of real rank at least two. Our approach utilizes  refined spectral estimates based on the existence of universal pointwise bounds for spherical functions on the groups involved. We focus particularly on the case of the special linear groups where we give a detailed proof of error estimates which constitute the first improvement of the best current bound established by Duke, Rudnick and Sarnak in 1991, and are nearly twice as good in some cases. 
\end{abstract}

\maketitle

{\small
\tableofcontents
}

\section{The lattice point counting problem in higher rank simple groups\label{sub:Statement_of_results}}

Let $G$ denote a connected non-compact simple Lie group with finite center, and $\Gamma$  a lattice in $G$, namely discrete subgroup of finite covolume. Let $\Vol$ denote the Haar measure on $G$ normalized so that $\Gamma$ has co-volume $1$. 
%by $\Vol_{\nicefrac{G}{\Gamma}}\left(\nicefrac{G}{\Gamma}\right)=1$.
We denote by $K$ a maximal compact subgroup of $G$. Let $\tau\colon G\rightarrow GL\left(N,\mr\right)$
be a non-trivial  irreducible representation of $G$ on $N$-dimensional Euclidean
space. We assume (without lost of generality) that $\tau\left(K\right)\subset SO\left(N\right)$.
Let $\norm{\cdot}^2$ denote the Euclidean norm $tr\left(A^{t}A\right)$
on $GL\left(N,\mr\right)$, and let $\norm g_{\tau}^{2}=\norm{\tau\left(g\right)}^{2}=\mbox{tr}\left(\tau\left(g\right)^{t}\tau\left(g\right)\right)$.
We will consider balls of radius $T$ in $G$ with respect to $\norm{\cdot}_{\tau}$,
namely: 
\begin{equation}
B_{T}^{\tau}=\left\{ g\in G\colon\norm g_{\tau}\leq T\right\} .\label{eq:domains}
\end{equation}
Note that $B_{T}^{\tau}$ is invariant under left and right translations
by $K$, and we will call sets satisfying this condition bi-$K$-invariant, or radial 
sets. In the present paper we will study the lattice point counting
problem in the balls $B_{T}^{\tau}$, namely we will aim to establish
an asymptotic formula for $\left|B_{T}^{\tau}\cap\Gamma\right|$ in
the form: 
\[
\frac{\left|B_{T}^{\tau}\cap\Gamma\right|}{\Vol\left(B_{T}^{\tau}\right)}=1+O\left(\Vol(B_{T}^{\tau})^{-\kappa}\right)\,,
\]
with $\kappa>0$ as large as possible, and $T\ge T_0> 0$. Before we state our main results,
let us recall what is currently the best exponent known for the error term in the higher rank case, established
by Duke, Rudnick and Sarnak. For the group $G=\SL{n+1}$ with $n\geq2$, and the balls $B_{T}^{\tau}$, it is as follows.
\begin{thm}
\cite[Thm. 3.1]{DRS} \label{DRS}Let $\Gamma\subset\SL{n+1}$ be
any lattice. Then for $T$ sufficiently large, and any $\eta>0$,   

\begin{eqnarray}
\frac{\left|B_{T}^{\tau}\cap\Gamma\right|}{\Vol\left(B_{T}^{\tau}\right)} & = & 1+O_\eta\left(\Vol(B_{T}^{\tau})^{-\frac{1}{n(n+1)(n+2)}+\eta}\right).\label{eq:DRS-EQ}
\end{eqnarray}

\end{thm}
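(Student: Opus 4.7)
The plan is to use the smoothing-plus-spectral-gap scheme pioneered in \cite{DRS} itself. Write $\beta_T = \chi_{B_T^\tau}/\Vol(B_T^\tau)$ and form its $\Gamma$-periodization $\widetilde\beta_T(g\Gamma) := \sum_{\gamma \in \Gamma} \beta_T(g\gamma)$, so that $\int_{G/\Gamma} \widetilde\beta_T = 1$ (using $\Vol(\gng)=1$) and, since $B_T^\tau$ is symmetric, $\widetilde\beta_T(e\Gamma) = |B_T^\tau \cap \Gamma|/\Vol(B_T^\tau)$. The task reduces to bounding $|\widetilde\beta_T(e\Gamma)-1|$.

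For the smoothing step, fix a scale $\epsilon > 0$ and a smooth non-negative bump $\phi_\epsilon \in C_c^\infty(G)$ supported in a metric $\epsilon$-ball at the identity with $\int_G \phi_\epsilon = 1$. Assuming $\epsilon$ is small enough that $\phi_\epsilon$ descends unambiguously to $G/\Gamma$, write
$$\widetilde\beta_T(e\Gamma) = \int_{G/\Gamma} \phi_\epsilon(x)\,\widetilde\beta_T(x)\,dx + E_{\mathrm{geo}}(T,\epsilon),$$
where the geometric error is controlled by $\sup_{g \in \operatorname{supp}\phi_\epsilon} |\widetilde\beta_T(g\Gamma) - \widetilde\beta_T(e\Gamma)|$. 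The latter is bounded by the $\Gamma$-count of the symmetric difference $g^{-1}B_T^\tau \triangle B_T^\tau$ divided by $\Vol(B_T^\tau)$; for $g$ in the $\epsilon$-ball, this sits in an $\epsilon$-shell of $\partial B_T^\tau$ of relative volume $O(\epsilon)$, by polynomial-type growth of $\Vol(B_T^\tau)$ and the Lipschitz behaviour of $\|\cdot\|_\tau$ under small group translations. An a priori trivial counting bound on shells then gives $E_{\mathrm{geo}} = O(\epsilon)$.

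For the main term, decompose $\widetilde\beta_T = 1 + h_T$ in $L^2(G/\Gamma)$ with $h_T \in L^2_0$. The constant part integrates to $\int \phi_\epsilon = 1$, leaving the spectral contribution $\int_{G/\Gamma} \phi_\epsilon h_T$, which by Cauchy--Schwarz is at most $\|\phi_\epsilon\|_{L^2(G/\Gamma)}\,\|h_T\|_{L^2(G/\Gamma)}$, with $\|\phi_\epsilon\|_{L^2} \asymp \epsilon^{-d/2}$ for $d = \dim G = n(n+2)$. The remaining factor $\|h_T\|_2$ is controlled via the regular representation $\pi$ of $G$ on $L^2_0(G/\Gamma)$: the bi-$K$-invariance of $\beta_T$ reduces the question to a spherical-Plancherel computation, and the pointwise bound $|\varphi_\pi(g)| \leq \Xi(g)$ on spherical matrix coefficients, combined with the automorphic $L^p$-integrability of matrix coefficients for $\SL{n+1}$ (obtained via Kazhdan's property $(T)$ and the associated bounds), yields a decay $\|h_T\|_2 \leq C_\eta \Vol(B_T^\tau)^{-\alpha_n+\eta}$ for an explicit $\alpha_n > 0$.

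Balancing the two errors $O(\epsilon)$ and $O(\epsilon^{-d/2}\Vol(B_T^\tau)^{-\alpha_n+\eta})$ over $\epsilon$ produces the final rate, with the arithmetic $d = n(n+2)$ and the appropriate $\alpha_n$ combining to give $\kappa = 1/(n(n+1)(n+2))$, up to the loss of $\eta$. The main obstacle is extracting the sharp spectral rate $\alpha_n$: the trivial operator-norm bound $\|\pi(\beta_T)\|_{\mathrm{op}} \leq \|\beta_T\|_1 = 1$ is useless, and even the $L^q$-convexity bound $\|\pi(\beta_T)\|_{\mathrm{op}} \leq \|\beta_T\|_{L^q(G)} = \Vol(B_T^\tau)^{-1/p}$ (with $p$ the automorphic integrability exponent) is suboptimal for radial $\beta_T$. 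Achieving the DRS exponent requires the refined spherical-Plancherel argument sketched above, which is also precisely the point at which later sections of the present paper intervene to strengthen the final bound.
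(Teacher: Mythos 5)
Theorem \ref{DRS} is not proved in this paper; it is quoted from \cite{DRS}, and the derivation the paper sketches in \S 3 goes through the abstract counting result \cite[Theorem 1.9]{GN}, applied together with the operator-norm bound $\|\pi^0_{G/\Gamma}(b_T^\tau)\|\le\frac{1}{\Vol(B_T^\tau)}\int_{B_T^\tau}F(g)\,dg$ and H\"older's inequality. In that framework the exponent on the volume is $\frac{1}{p(1+d)}$ with $d=\dim(G/K)=\frac{(n+1)(n+2)}{2}-1$ and $p=2n$, which gives $\kappa_0=\frac{1}{n(n+1)(n+2)}$ exactly.

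Your outline is in the right DRS/Gorodnik--Nevo spirit (smoothing plus spectral gap), but the central spectral step has a genuine gap. You apply Cauchy--Schwarz to $\int_{G/\Gamma}\phi_\epsilon h_T$ and then assert $\|h_T\|_{L^2(G/\Gamma)}\lesssim_\eta\Vol(B_T^\tau)^{-\alpha_n+\eta}$ on the strength of pointwise bounds on spherical functions and automorphic $L^p$-integrability. What these inputs control is the \emph{operator norm} $\|\pi^0_{G/\Gamma}(b_T^\tau)\|$, i.e.\ the inequality $\|\pi_{G/\Gamma}(b_T^\tau)f-\int f\|_2\le\|\pi^0_{G/\Gamma}(b_T^\tau)\|\,\|f\|_2$ for $f\in L^2(G/\Gamma)$. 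But $h_T=\widetilde\beta_T-1$ is not of that form: the periodization $\widetilde\beta_T$ is $\pi_{G/\Gamma}(b_T^\tau)$ applied to the Dirac mass at $e\Gamma$, which is not an $L^2$ vector. Concretely, $\|h_T\|_2^2=\|\widetilde\beta_T\|_2^2-1=\frac{1}{\Vol(B_T^\tau)^2}\sum_{\gamma\in\Gamma}\Vol\left(B_T^\tau\cap\gamma B_T^\tau\right)-1$, and estimating this lattice sum is essentially the counting problem over again; the operator-norm bound by itself says nothing about it. The step that does work, and is what \cite[Theorem 1.9]{GN} packages, runs the mean ergodic theorem against the periodized bump on both sides, $\langle\pi_{G/\Gamma}(b_T^\tau)\widetilde\phi_\epsilon,\widetilde\phi_\epsilon\rangle$, at cost $\|\widetilde\phi_\epsilon\|_2^2\asymp\epsilon^{-d}$ (note $\epsilon^{-d}$, not the $\epsilon^{-d/2}$ you wrote), and compares this bilinear form with the lattice count via a sandwich $b_{T-c\epsilon}^\tau\le\phi_\epsilon*b_T^\tau*\phi_\epsilon\le b_{T+c\epsilon}^\tau$, exploiting the well-roundedness (Lipschitz admissibility) of the norm balls. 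Your geometric-error step $E_{\mathrm{geo}}=O(\epsilon)$ has the same circularity: a ``trivial counting bound on shells'' of the form $|\Gamma\cap(\mbox{shell})|\lesssim\Vol(\mbox{shell})$ is not available a priori, since it is part of what one is trying to prove.

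Finally, your arithmetic does not reproduce the stated exponent. Balancing $\epsilon$ against $\epsilon^{-d/2}\Vol(B_T^\tau)^{-1/p+\eta}$ with $d=\dim G=n(n+2)$ and $p=2n$ gives $\kappa=\frac{2}{p(2+d)}=\frac{1}{n(n^2+2n+2)}$, which is not $\frac{1}{n(n+1)(n+2)}$ (in fact it is slightly larger, an artifact of the unjustified $\|h_T\|_2$ bound). The correct accounting, with cost $\epsilon^{-d}$ and $d=\dim(G/K)$, yields $\frac{1}{p(1+d)}=\frac{1}{n(n+1)(n+2)}$.
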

We will denote this exponent by $\kappa_{0}=\kappa_0(n)=\frac{1}{n(n+1)(n+2)}$. 

Our purpose in the present paper is to improve this exponent for a large collection
of families $B_{T}^{\tau}$, associated with suitable irreducible
representations of $\SL{n+1}$. We note that our method applies in
principle to any connected higher-rank simple Lie group with finite
center, but for simplicity of exposition we will concentrate below only on the case
of $G=\SL{n+1}$ with $n\geq2$. Let us begin by stating a special
case of our main result and comparing it to the exponent cited above.
\begin{thm}\label{adjoint}
\label{thm:adjoin_estimate}Let $G=\SL{n+1}$, $n\geq2$ and $\Gamma\subset G$
be any lattice. Consider the adjoint representation of $\SL{n+1}$. Then, for $T\ge T_0$

\begin{enumerate}
\item For $n$ odd: 
\begin{eqnarray*}
\frac{\left|B_{T}^{\mbox{Ad}}\cap\Gamma\right|}{\Vol\left(B_{T}^{\mbox{Ad}}\right)} & = & 1+O\left(\Vol\left(B_{T}^{\mbox{Ad}}\right)^{-2\frac{n}{n+1}\kappa_{0}}\left(\log T\right)^{q}\right).
\end{eqnarray*}

\item For $n$ even: 
\begin{eqnarray*}
\frac{\left|B_{T}^{\mbox{Ad}}\cap\Gamma\right|}{\Vol\left(B_{T}^{\mbox{Ad}}\right)} & = & 1+O\left(\Vol\left(B_{T}^{\mbox{Ad}}\right)^{-2\frac{n}{n+2}\kappa_{0}}\left(\log T\right)^{q}\right).
\end{eqnarray*}

\end{enumerate}

In both cases $q$ and $T_0$ are positive numbers which depends on $n$ but not on the lattice, and can be made explicit. 
\end{thm}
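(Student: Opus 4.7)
\emph{Proof plan.} The strategy is to derive Theorem \ref{adjoint} as a specialization of the paper's general counting scheme, which refines the Duke--Rudnick--Sarnak method \cite{DRS} by replacing $L^{p}$-integrability estimates on matrix coefficients with universal pointwise bounds on spherical functions of $G=\SL{n+1}$. I describe the three blocks: smoothing, spectral estimate, and volume-plus-optimization.

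\emph{Smoothing and spectral input.} Fix a bi-$K$-invariant approximate identity $\psi_{\varepsilon}$ of mass one supported in an $\varepsilon$-neighborhood of $e\in G$, and for $f\in L^{1}(G)$ put $f^{\Gamma}(g):=\sum_{\gamma\in\Gamma}f(g\gamma)\in L^{2}(\gng)$. Since $\norm{\cdot}_{\mathrm{Ad}}$ is bi-$K$-invariant and Lipschitz on bounded pieces of $G$, a standard sandwich
\[
\chi_{B_{T(1-c\varepsilon)}^{\mathrm{Ad}}}*\psi_{\varepsilon}\ \le\ \chi_{B_{T}^{\mathrm{Ad}}}\ \le\ \chi_{B_{T(1+c\varepsilon)}^{\mathrm{Ad}}}*\psi_{\varepsilon},
\]
together with a boundary-tube volume estimate in $KAK$ coordinates, reduces the count to
\[
|\Gamma\cap B_{T}^{\mathrm{Ad}}|=(\chi_{B_{T}^{\mathrm{Ad}}}*\psi_{\varepsilon})^{\Gamma}(e)+O\bigl(\varepsilon\,\Vol(B_{T}^{\mathrm{Ad}})\bigr).
\]
Decomposing $L^{2}(\gng)=\mc\cdot 1\oplus L_{0}^{2}$ splits off the main term $\Vol(B_{T}^{\mathrm{Ad}})$; the remainder is controlled by the operator norm of $\pi_{0}(\chi_{B_{T}^{\mathrm{Ad}}}*\psi_{\varepsilon})$, with $\pi_{0}$ the regular representation on $L_{0}^{2}(\gng)$. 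The universal pointwise spherical bound then yields a Sobolev-type estimate
\[
\|\pi_{0}(\chi_{B_{T}^{\mathrm{Ad}}}*\psi_{\varepsilon})\|_{\mathrm{op}}\ \ll\ \varepsilon^{-A}\!\int_{B_{T}^{\mathrm{Ad}}}\!\Xi(g)^{\eta}\,dg,
\]
where $\Xi$ is the Harish--Chandra $\Xi$-function and $\eta>0$ depends only on the universal spectral gap for $L_{0}^{2}(\gng)$. The point of using the pointwise bound rather than an $L^{p}$-inequality is that the chamber-anisotropy of $\Xi$ is preserved, rather than collapsed to an isotropic supremum over $B_{T}^{\mathrm{Ad}}$.

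\emph{Volume and optimization.} In $KAK$ coordinates with $a=\dm(e^{t_{1}},\ldots,e^{t_{n+1}})\in A^{+}$, one has $\norm{a}_{\mathrm{Ad}}^{2}=n+\sum_{i\ne j}e^{2(t_{i}-t_{j})}\asymp e^{2(t_{1}-t_{n+1})}$, so $B_{T}^{\mathrm{Ad}}\cap A^{+}$ is essentially the slab $t_{1}-t_{n+1}\le\log T$ intersected with the closed chamber. Integrating the Jacobian $\prod_{i<j}\sinh(t_{i}-t_{j})$ against this single linear constraint yields an explicit asymptotic $\Vol(B_{T}^{\mathrm{Ad}})\asymp T^{N(n)}(\log T)^{q(n)}$, and the same coordinates evaluate $\int_{B_{T}^{\mathrm{Ad}}}\Xi^{\eta}\asymp \Vol(B_{T}^{\mathrm{Ad}})^{1-\delta(n)}$ for an explicit $\delta(n)$. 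Balancing
\[
\bigl||\Gamma\cap B_{T}^{\mathrm{Ad}}|-\Vol(B_{T}^{\mathrm{Ad}})\bigr|\ \ll\ \varepsilon\,\Vol(B_{T}^{\mathrm{Ad}})+\varepsilon^{-A}\,\Vol(B_{T}^{\mathrm{Ad}})^{1-\delta(n)}
\]
in $\varepsilon$ produces the error exponent $\kappa=\delta(n)/(1+A)$, up to a factor $(\log T)^{q}$. The parity of $n$ enters through the position of the adjoint highest weight $\alpha_{1}+\alpha_{n}$ with respect to the $\rho$-weighted walls of the Weyl chamber, yielding exactly $\kappa=\tfrac{2n}{n+1}\kappa_{0}$ for odd $n$ and $\kappa=\tfrac{2n}{n+2}\kappa_{0}$ for even $n$.

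\emph{Main obstacle.} The step that is \emph{not} routine is passing from the universal pointwise bound on $\Xi$ — a statement about single group elements — to the operator-norm estimate above, while retaining the Weyl-chamber anisotropy of $\Xi$ in the final integral. It is exactly this preservation of anisotropy which drives the improvement over the isotropic $L^{p}$-bound of \cite{DRS}; once the anisotropic spectral estimate is in hand, the volume computations and the optimization in $\varepsilon$ are mechanical, and the parity dichotomy in the final exponent is intrinsic to the combinatorics of how the adjoint highest weight meets the walls of $\mathfrak{a}^{+}$.
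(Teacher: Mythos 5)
The proposal has a substantive gap at exactly the step you flag as the ``main obstacle.'' You assert that the improved exponent is obtained by using the Harish--Chandra $\Xi$-function as the universal pointwise bound and bounding $\|\pi_0(\chi_{B_T^{\mathrm{Ad}}}*\psi_\varepsilon)\|$ by $\varepsilon^{-A}\int_{B_T^{\mathrm{Ad}}}\Xi(g)^\eta\,dg$, crediting the gain to ``preserving the chamber-anisotropy of $\Xi$.'' This is precisely the bound the paper shows does \emph{not} improve on Duke--Rudnick--Sarnak. In the paper's notation, $\Xi^{1/n_G}(e^H)\ll (1+\|H\|)^{|\Phi^+|}e^{-\rho(H)/n_G}$, so the defining functional is $\theta=\rho/n_G$, which is \emph{proportional to $\rho$}. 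Then $\psi=2\rho-\theta=2\rho(1-\tfrac{1}{2n_G})$ is a scalar multiple of $2\rho$, which forces $m_1'=m_1/(1-\tfrac{1}{2n_G})$ for \emph{every} dominant weight $\lambda$, and hence the exponent $2n(1-m_1/m_1')\kappa_0=\kappa_0$ with no improvement (this is Proposition 5.2 in the paper). The anisotropy of $\Xi$ as a function on $A^+$ is real, but because its decay rate $\rho/n_G$ points in the $\rho$-direction, integrating it over $B_T^\tau$ gives exactly the same exponent as H\"older against the $L^{2n+\eta}$ norm.

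What actually drives the improvement, and what is missing from your proposal, is replacing $\Xi^\eta$ by Oh's universal pointwise bound $F_\gamma(H)=P(H)e^{-\gamma(H)}$, where $\gamma$ is a linear functional that is \emph{not} proportional to $\rho$ (its coefficients in the $\alpha_i$-basis grow linearly towards the middle index rather than quadratically as $\rho$'s do). Because $\gamma$ has a genuinely different shape from $\rho$, the quantity $m_1'=\min_j \lambda(\tilde\beta_j)/\psi(\tilde\beta_j)$ with $\psi=2\rho-\gamma$ can differ from a constant multiple of $m_1$, and whether it does depends on which $\tilde\beta_j$ minimizes the two ratios. The parity dichotomy you cite is not ``intrinsic to how the adjoint highest weight meets the $\rho$-weighted walls''; it comes from the explicit parity-dependent formula for $\gamma$ (which has a distinguished middle coefficient when $n$ is even and two equal middle coefficients when $n$ is odd), leading to the minimum of $\psi(\tilde\beta_i)/2\rho(\tilde\beta_i)$ being $n/(n+1)$ or $(n+1)/(n+2)$ respectively. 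The remaining computation — checking that the highest weight $\beta=\lambda_1+\lambda_n$ of the adjoint representation lies in the minimizing cone $I(\lambda)\cap I'(\lambda)$ containing the middle index — is the content of the paper's Theorem 5.5 and Proposition 5.6, and that is what the student's proposal never reaches. (Also note: you write the adjoint highest weight as $\alpha_1+\alpha_n$; for $n>2$ the highest root is $\sum_{i=1}^n\alpha_i=\lambda_1+\lambda_n$, so $\alpha_1+\alpha_n$ is not correct.)

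In short: the smoothing and mean-ergodic framework you sketch matches the paper's use of \cite{GN}, but the spectral input you choose (powers of $\Xi$) is demonstrably insufficient, and the argument as written cannot deliver any exponent better than $\kappa_0$.
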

Thus, for large $n$, the exponent established above is nearly twice as large as the exponent $\kappa_0$ established by \cite{DRS} for the error term. The first case where the estimate is improved is for $G=SL_4(\mathbb{R})$.  

Anticipating our results below, let us note that we will establish an error term exponent  for Euclidean norm balls associated with any irreducible representation $\tau$ of $G$. These representations are classified by dominant weights, and for a certain non-empty cone of dominant weights we will establish the exponent stated in Theorem  \ref{adjoint}. For dominant weights belonging to certain other cones, we will establish an exponent which improves on $\kappa_0$ but is smaller than the one stated in Theorem \ref{adjoint}. 
\section{Preliminaries and notation}

\subsection{Lie algebras : roots and weights of $\SL{n+1}$}\label{sub:Lie-algebras}

In the present section we establish notation and record some preliminaries. Our discussion is based on \cite{Humphreys}. 

Let $G=\SL{n+1}$, $K=SO\left(n+1,\mr\right)$ a maximal compact subgroup
and let $A$ be the following $\mathbb{R}$-split Cartan subgroup, 
$$A=\left\{ \mbox{diag}\left(a_{1},...,a_{n+1}\right)\colon\prod a_{i}=1\mbox{ and}\hspace{1em}a_{i}>0,\hspace{1em}\text{for    } 1\leq i\leq n+1\right\}\,.$$
Let $\g$ be the Lie algebra of $G$ and $\ma$ of $A$. Let $\left(X,Y\right)_{\g}=\mbox{tr}\left(\mbox{ad}_{\g}\left(X\right)\mbox{ad}_{\g}\left(Y\right)\right)$
be the Killing form on $\mathfrak{g}$. For $\gamma,\delta\in\ma^{*}=\text{Hom}(\ma,\mathbb{R})$
we set $\left\langle \gamma,\delta\right\rangle =\frac{2\left(\gamma,\delta\right)}{\left(\delta,\delta\right)}$,
where $\left(\cdot,\cdot\right)$ is the form induced from $\left(\cdot,\cdot\right)_{\g}$.
Let $\Phi\subset\ma^{*}$ be the root system for the pair $\left(\mathfrak{g,a}\right)$.
Fix a choice of simple roots $\Delta=\left\{ \alpha_{1},...,\alpha_{n}\right\} \subset\Phi$
and denote the set of positive roots by $\Phi^{+}$. Denote by $\ma^{+}=\left\{ H\in\ma\colon\alpha_{i}\left(H\right)\geq0,\,\,1\leq i\leq n\right\} $
the non-negative Weyl chamber with respect to $\Delta$. Let $\left\{ \tilde{\beta}_{j}\right\} _{j=1}^{n}\subset\ma^{+}$
be defined by $\alpha_{i}\left(\tilde{\beta}_{j}\right)=\delta_{i,j}$
$1\leq i,j\leq n$, namely the dual basis of the simple roots. An
element $\gamma\in\ma^{*}$ is called a weight if the numbers $\left\langle \gamma,\alpha\right\rangle $
are integers for all $\alpha\in\Phi$, and denote by $\Lambda$ the set
of all weights. For $\gamma\in\Lambda$, if the integers $\left\langle \gamma,\alpha\right\rangle $
are non-negative for all $\alpha\in\Delta$, then the weight is called
dominant. Let $\Lambda^{+}$ denote the set of dominant weights. We
denote by $\lambda_{i}\,,$ $1\leq i\leq n$ the fundamental weights,
namely those satisfying the equations $\left\langle \lambda_{i},\alpha_{j}\right\rangle =\delta_{i,j}$
$1\leq i,j\leq n$. Another example of a dominant weight is half the
sum of the positive roots, denoted $\rho$, which is equal to ${\displaystyle \sum_{i=1}^{n}\lambda_{i}}$.
We recall that finite dimensional irreducible representations of $\SL{n+1}$,
are in a bijective correspondence with dominant weights, see e.g.  \cite[Theorem 5.5]{Knapp-Green}.

\subsection{Volumes of radial balls  in $\SL{n+1}$\label{sub:Volumes}}

Every $g\in G=\SL{n+1}$ can be written as $g=k_{1}ak_{2}$ where
$k_{i}\in K$ and $a\in A^{+}$. This decomposition yield the 
integration formula \cite[p.142]{Knapp} :
\begin{prop}
Given a Haar measure on $G$, 
for $f\in C_{c}\left(\SL{n+1}\right)$ we have, 
\begin{eqnarray}
 &  & \int_{\SL{n+1}}f\left(g\right)dg=\label{eq:IntegralFormula}\\
 &  & \int_{K\times\mathfrak{a}^{+}\times K}f\left(k_{1}\exp\left(H\right)k_{2}\right)\prod_{\alpha\in\Phi^{+}}\sinh\left(\alpha\left(H\right)\right)dk_{1}dHdk_{2}\nonumber 
\end{eqnarray}
where $dH$ is a suitable scalar multiple of the Lebesgue measure on $\ma$,
and $dk$ is the Haar probability measure on the maximal compact subgroup
$K$.
\end{prop}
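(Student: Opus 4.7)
The plan is to derive the formula from the Cartan ($KAK$) decomposition and a direct Jacobian calculation for the parametrization map $\Phi\colon K\times\ma^{+}\times K\to G$, $\Phi(k_{1},H,k_{2})=k_{1}\exp(H)k_{2}$. First I would recall that $\Phi$ surjects onto $G$ and is a local diffeomorphism onto the open dense set of \emph{regular} elements (those with $\alpha(H)>0$ for every $\alpha\in\Phi^{+}$), where the fibers are cosets of the centralizer $M=Z_{K}(A)$. For $G=\SL{n+1}$ one checks $M$ consists of the diagonal matrices in $SO(n+1)$ with entries $\pm1$ and product $+1$, so $|M|=2^{n}$ is finite and $\mathfrak{m}=0$; this finite multiplicity will be absorbed into the unspecified scalar factor of $dH$. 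The complement of the regular set has measure zero, so it is harmless to integrate there.

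Next I would compute $d\Phi$. By left invariance of Haar measure and the bi-$K$-invariance of $dk_{1}dk_{2}$, it suffices to compute the differential at $(e,H,e)$ and show the Jacobian depends only on $H$. For tangent vectors $(X_{1},Y,X_{2})\in\mathfrak{k}\oplus\ma\oplus\mathfrak{k}$, differentiating $e^{tX_{1}}\exp(H+tY)e^{tX_{2}}$ and left-translating by $\exp(-H)$ yields
\begin{equation*}
(X_{1},Y,X_{2})\;\longmapsto\;\mbox{Ad}(\exp(-H))X_{1}+Y+X_{2}\in\mathfrak{g}.
\end{equation*}
I would then decompose $\mathfrak{g}=\ma\oplus\bigoplus_{\alpha\in\Phi^{+}}(\mathfrak{g}_{\alpha}\oplus\mathfrak{g}_{-\alpha})$ and, correspondingly, $\mathfrak{k}=\bigoplus_{\alpha\in\Phi^{+}}\mathfrak{k}_{\alpha}$ where $\mathfrak{k}_{\alpha}=\mathbb{R}(X_{\alpha}+\theta X_{\alpha})$, with $\theta$ the Cartan involution and $X_{\alpha}$ a root vector. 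Since $\mbox{Ad}(\exp(-H))(X_{\alpha}+\theta X_{\alpha})=\cosh(\alpha(H))(X_{\alpha}+\theta X_{\alpha})-\sinh(\alpha(H))(X_{\alpha}-\theta X_{\alpha})$, on each $\alpha$-block the contribution of $(X_{1,\alpha},X_{2,\alpha})$ to the $\mathfrak{k}_{\alpha}$- and $\mathfrak{p}_{\alpha}$-components is given by the matrix
\begin{equation*}
\begin{pmatrix} \cosh(\alpha(H)) & 1 \\ -\sinh(\alpha(H)) & 0\end{pmatrix},
\end{equation*}
whose determinant equals $\sinh(\alpha(H))$.

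Multiplying over $\alpha\in\Phi^{+}$ (all root multiplicities in $\sl{n+1}{\mr}$ are one) and noting that the $Y\in\ma$ direction maps isomorphically onto the $\ma$-component, I conclude that the absolute value of the Jacobian of $\Phi$ at $(e,H,e)$ with respect to fixed bases is $\prod_{\alpha\in\Phi^{+}}\sinh(\alpha(H))$, up to a constant depending on the choice of bases and on $|M|$. By the bi-$K$-invariance noted above, the same expression is the Jacobian at every $(k_{1},H,k_{2})$. The change-of-variables formula, together with the observation that every $f\in C_{c}(G)$ is supported essentially on the regular set modulo a null set, then yields the stated formula, where the scalar multiple hidden in $dH$ absorbs $|M|^{-1}$ and the normalizing constants from identifying $T_{e}K$ with $\mathfrak{k}$.

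The main technical point is the algebraic manipulation in the root-space decomposition that produces the $\sinh$ factors; the rest is a standard application of the change-of-variables formula once one is careful to discard the measure-zero set of singular elements and to track the finite multiplicity contributed by $M$.
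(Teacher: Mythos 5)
The paper does not actually prove this proposition; it simply cites Knapp (\cite[p.142]{Knapp}) for the $KAK$ integration formula. Your argument is a correct and self-contained reconstruction of the standard Jacobian derivation that underlies that reference: you compute $d\Phi$ at $(e,H,e)$, left-translate by $\exp(-H)$ to get the linear map $(X_{1},Y,X_{2})\mapsto\mbox{Ad}(\exp(-H))X_{1}+Y+X_{2}$, and use the root-space decomposition (with $\mathfrak{m}=0$ and all root multiplicities one, since $\sl{n+1}{\mr}$ is split) to diagonalize it into $2\times2$ blocks with determinant $\sinh(\alpha(H))$. The identity $\mbox{Ad}(\exp(-H))(X_\alpha+\theta X_\alpha)=\cosh(\alpha(H))(X_\alpha+\theta X_\alpha)-\sinh(\alpha(H))(X_\alpha-\theta X_\alpha)$ is verified correctly, the fiber of $\Phi$ over a regular point has exactly $|M|=2^{n}$ elements as you say, and the singular set (the boundary walls of $\ma^{+}$) carries zero density since each $\sinh(\alpha(H))$ vanishes there, so the closed chamber in the statement causes no trouble. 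In short: where the paper just invokes a reference, you have supplied the proof; it is correct and is the same argument the reference gives.
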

This formula was used in \cite{GW,M} to compute the asymptotic volume
of general norm balls in connected noncompact semisimple Lie groups. The computation applies in particular to the balls we investigate. Let $\tau$, $B_{T}^{\tau}$,
$\Gamma$ and $\Vol$ be as in §\ref{sub:Statement_of_results}. Let
$\lambda$ be the highest weight for the representation $\tau$. Applying
\cite[Thm 2.7]{GW} (or equivalently \cite[Corollary 1.1]{M}) to
$B_{T}^{\tau}\subset G$ we get the following asymptotics, 
\[
\Vol\left(B_{T}^{\tau}\right)\sim C_{1}\left(\log T\right)^{l}T^{\frac{1}{m_{1}}}\,,
\]
where $l\in\mathbb{N}$ and $C_{1}>0$ is a constant independent of
$T$. The rate of growth, namely $m_{1}$, is given by the following expression : 
\begin{equation}
{\displaystyle m_{1}=\min_{j\in\left\{ 1,...,n\right\} }\frac{\lambda\left(\tilde{\beta}_{j}\right)}{2\rho\left(\tilde{\beta}_{j}\right)}}.\label{eq:m1}
\end{equation}
Let $I=I(\lambda)=\left\{ 1\leq i\leq n\colon m_{1}=\frac{\lambda\left(\tilde{\beta}_{i}\right)}{2\rho\left(\tilde{\beta}_{i}\right)}\right\} $
be the set of minimizing indices, namely the set of indices where the minimum is obtained.

\section{Averaging operators and counting lattice points}

\subsection{The spectral method of counting lattice points}\label{sec:spectral}

%\subsubsection{Outlining the main tools for error estimate.\label{sub:conditions_of_1.9}}

Let $G=\SL{n+1}$, $\Gamma\subset G$ any lattice and $B_{T}^{\tau}$
be as in §\ref{sub:Statement_of_results}. Let $\pi_{\nicefrac{G}{\Gamma}}$
be the unitary representation of $G$ on $L^{2}\left(\nicefrac{G}{\Gamma}\right)$, given by 
$\left(\pi_{\nicefrac{G}{\Gamma}}\left(g\right)\tilde{f}\right)\left(h\Gamma\right)=\tilde{f}\left(g^{-1}h\Gamma\right),\hspace{1em}\forall\tilde{f}\in L^{2}\left(\nicefrac{G}{\Gamma}\right),h\in\nicefrac{G}{\Gamma}$.
Let $b_{T}^{\tau}$ denote the normalized indicator function, ${\displaystyle \frac{\chi_{B_{T}^{\tau}}}{\Vol\left(B_{T}^{\tau}\right)}}$.
The averaging operators $\pi_{\nicefrac{G}{\Gamma}}\left(b_{T}^{\tau}\right)$
are defined by,
\[
\left(\pi_{\nicefrac{G}{\Gamma}}\left(b_{T}^{\tau}\right)\left(\tilde{f}\right)\right)\left(x\right)=\frac{1}{\Vol\left(B_{T}^{\tau}\right)}\int_{B_{T}^{\tau}}
\pi_{\nicefrac{G}{\Gamma}}\left(g\right)\tilde{f}\left(x\Gamma\right)dg,\hspace{1em}\forall\tilde{f}\in L^{2}\left(\nicefrac{G}{\Gamma}\right).
\]
We let $L_{0}^{2}\left(\nicefrac{G}{\Gamma}\right)$ denote the space
of $L^{2}$-functions on $\nicefrac{G}{\Gamma}$ with zero integral,
and we let $\pi_{\nicefrac{G}{\Gamma}}^{0}$ denote the restriction
of the representation $\pi_{\nicefrac{G}{\Gamma}}$ to $L^{2}_0\left(\nicefrac{G}{\Gamma}\right)$.

We will use \cite[Theorem 1.9]{GN} to establish our estimate of the
error term. To apply this result to our families of balls
$B_{T}^{\tau}$ it is enough to show that the following two conditions are satisfied. 
\begin{enumerate}
\item The families are Lipschitz admissible in the sense of \cite[ Theorem 3.15]{GN10}. 
\item The averaging operators $\pi_{\nicefrac{G}{\Gamma}}\left(b_{T}^{\tau}\right)$
satisfy the quantitative mean ergodic theorem, with rate function given as a negative power of the volume.
\end{enumerate}
Condition 1 is explained and established in \cite[ Theorem 3.15]{GN10}. The arguments
for showing Condition 2 are of spectral nature, and we turn to explain how to exploit the spherical spectrum of $L_{0}^{2}\left(\nicefrac{G}{\Gamma}\right)$ for this purpose. 

\subsection{Spectral estimates of spherical functions}
We will use concepts from the theory of Gelfand pairs and the theory
of Banach $\ast$-algebras, and for a general exposition of this theory we refer to
\cite{Wolf,Folland}. 

Let $G$ be a connected simple Lie group with
a finite center,  $K\subset G$ a maximal compact subgroup. It is well
known that $\left(G,K\right)$ is a Gelfand pair, so $L^{1}\left(K\backslash G/K\right)$
is a commutative Banach $\ast$-algebra. The map $f\mapsto f^{*}$ defined
by, $f^{*}\left(x\right)=\overline{f\left(x^{-1}\right)}$ is the
involution of $L^{1}\left(K\backslash G/K\right)$. Denote by $\Sigma$
the Gelfand spectrum of $\lkgk$. We can identify $\Sigma$ with the set of bounded
$\left(G,K\right)$-spherical functions $\omega$ on $G$ \cite[Theorem 8.2.7]{Wolf}.
Using this identification the Gelfand transform is given by $\hat{f}\left(\omega\right)=\int_{G}f\left(g\right)\omega\left(g^{-1}\right)dg$.
We denote by $\Sigma^{+}\subset\Sigma$ the subset of positive definite
spherical functions for $\left(G,K\right)$. Consider any unitary
representation $\pi\colon G\rightarrow\mathcal{U}\left(\mathcal{H}\right)$
with no invariant unit vectors. Such a representation defines a nondegenrate
$\ast$-representation of $\lkgk$ on $\hc$. This representation is defined
by $f\mapsto\pi\left(f\right)=\int_{G}f\left(g\right)\pi\left(g\right)dg$.
By the spectral theorem of $\ast$-representations \cite[Theorem 1.54]{Folland}
there is a unique regular projection-valued measure, denoted $P^{\pi}$, on
the spectrum of the algebra, such that 
$P^{\pi}$ is supported on $\Sigma^{+}$, and the following  formula holds : 
\begin{equation}
\forall f\in\lkgk\hspace{1em}\left\langle \pi\left(f\right)u,v\right\rangle =\int_{\Sigma^{+}}\hat{f}\left(\omega\right)dP_{u,v}^{\pi}\left(\omega\right)\,,\label{eq:spectal_thm}
\end{equation}
where $P^\pi_{u,v}$ is the scalar complex bounded Borel measure on the spectrum $\Sigma^+$ determined by the pair of vectors $u,v$  and the projection valued measure $P^\pi$. Therefore 
\begin{eqnarray}
\norm{\pi\left(f\right)}^{2} & = & \sup_{\norm v=1}\left\langle \pi\left(f\right)v,\pi\left(f\right)v\right\rangle \nonumber \\
 & = & \sup_{\norm v=1}\left\langle \pi\left(f^{*}*f\right)v,v\right\rangle \nonumber \\
 & = & \sup_{\norm v=1}\int_{\Sigma^{+}}\widehat{f^{*}*f}\left(\omega\right)dP_{v,v}^{\pi}\left(\omega\right)\label{eq:norm_est_pec_measure}\\
 & = & \sup_{\norm v=1}\int_{\Sigma^{+}}\widehat{f^{*}}\left(\omega\right)\cdot\hat{f}\left(\omega\right)dP_{v,v}^{\pi}\left(\omega\right)\nonumber \\
 & = & \sup_{\norm v=1}\int_{\Sigma^{+}}\left|\hat{f}\left(\omega\right)\right|^{2}dP_{v,v}^{\pi}\left(\omega\right).\nonumber 
\end{eqnarray}

A positive definite spherical function for $\left(G,K\right)$
arise as the matrix coefficient associated with the unique $K$-invariant unit vector  of a uniquely determined irreducible unitary representation \cite[Theorem 8.4.8]{Wolf}. Our approach is based on a remarkable uniform spectral estimate, which is a special feature of the spherical unitary representation theory of simple Lie groups of real rank at least $2$. Namely, we will use the fact that all the non-constant positive definite spherical functions 
can be {\it bounded by one and the same positive function} on the group. Any such bounding function $F$, which is refered to  as a universal pointwise bound, gives a norm bound on all bi-$K$-invariant averaging operators on the group, as follows.

\begin{thm}
Let $F$ be an upper bound for all matrix coefficients associated with $K$-invariant unit vectors 
of irreducible non-trivial unitary representations. Let $\pi$ be any unitary representation
without invariant unit vectors, as above. Then, for any bi-$K$-invariant function $f\in L^1(G)$ 
%we can estimate the norm of $\pi\left(f\right)$ by 
\begin{equation}
\norm{\pi\left(f\right)}\leq\int_{G}\left|f\left(g\right)\right|F\left(g\right)dg.\label{eq:norm_estimate}
\end{equation}
\end{thm}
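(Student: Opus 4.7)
My plan is to read off the stated operator-norm bound directly from the spectral identity
\begin{equation*}
\norm{\pi\left(f\right)}^{2} = \sup_{\norm v = 1} \int_{\Sigma^{+}} \left|\hat{f}\left(\omega\right)\right|^{2}\, dP_{v,v}^{\pi}\left(\omega\right),
\end{equation*}
which was established in the computation leading to (\ref{eq:norm_est_pec_measure}). The first step is to observe that the trivial spherical function $\omega \equiv 1$ makes no contribution to this integral: the projection $P^{\pi}\left(\left\{ 1\right\} \right)$ coincides with the orthogonal projection of $\hc$ onto the subspace of $G$-invariant vectors of $\pi$, and by hypothesis this subspace is trivial. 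Consequently, the effective domain of integration may be taken to be $\Sigma^{+} \setminus \left\{ 1\right\}$.

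The second step is a pointwise bound on $\hat{f}\left(\omega\right)$, uniform in $\omega \in \Sigma^{+} \setminus \left\{ 1\right\}$. By the classical description recalled in the paper, each such $\omega$ is the matrix coefficient $\omega\left(g\right) = \left\langle \pi_{\omega}\left(g\right) v_{\omega}, v_{\omega}\right\rangle$ of a non-trivial irreducible unitary representation $\pi_{\omega}$ at its (essentially unique) $K$-invariant unit vector $v_{\omega}$. The defining property of $F$ therefore yields $\left|\omega\left(g\right)\right| \le F\left(g\right)$ for every $g \in G$. Since positive definite spherical functions satisfy $\omega\left(g^{-1}\right) = \overline{\omega\left(g\right)}$ (a general property of diagonal matrix coefficients of unitary representations), the same bound applies to $\omega\left(g^{-1}\right)$, and hence
\begin{equation*}
\left|\hat{f}\left(\omega\right)\right| = \left|\int_{G} f\left(g\right) \omega\left(g^{-1}\right)\, dg\right| \le \int_{G} \left|f\left(g\right)\right|\, F\left(g\right)\, dg.
\end{equation*}

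The third step is routine: substituting this uniform-in-$\omega$ bound into the spectral expression, and using that $P_{v,v}^{\pi}$ is a positive scalar measure of total mass $\norm v^{2} = 1$, one obtains
\begin{equation*}
\norm{\pi\left(f\right)}^{2} \le \left(\int_{G} \left|f\left(g\right)\right|\, F\left(g\right)\, dg\right)^{2} \cdot \sup_{\norm v = 1} P_{v,v}^{\pi}\left(\Sigma^{+} \setminus \left\{ 1\right\}\right) \le \left(\int_{G} \left|f\left(g\right)\right|\, F\left(g\right)\, dg\right)^{2},
\end{equation*}
and the claim follows on extracting the square root. The only step requiring genuine care is the identification of $P^{\pi}\left(\left\{ 1\right\} \right)$ with the projection onto $G$-invariants; this rests on the fact that the Gelfand transform of $f \in \lkgk$ at the trivial spherical function is $\int_{G} f\left(g\right)\, dg$, so the corresponding spectral subspace is the one on which $\pi\left(f\right)$ acts as the trivial character of $\lkgk$, which on $K$-invariant vectors characterizes the $G$-fixed part. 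Beyond this short verification, the proposition is essentially a repackaging of the spectral identity, and I do not anticipate any deeper obstacle.
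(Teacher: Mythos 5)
Your proof is correct and follows the same spectral approach as the paper: bound $\hat{f}(\omega)$ pointwise by $\int_G |f|\,F\,dg$ using the universal bound $F$ on matrix coefficients, then feed this into the norm formula \eqref{eq:norm_est_pec_measure} and use that $P^\pi_{v,v}$ has total mass at most $1$. You are in fact a bit more careful than the paper at one point: the paper establishes the pointwise bound $|\omega(g^{-1})|\leq F(g)$ only for \emph{non-constant} $\omega$ (the trivial spherical function $\omega\equiv 1$ is in $\Sigma^+$ but is certainly not dominated by the decaying $F$) and then asserts the estimate ``for every $\omega\in\Sigma^+$'', whereas your explicit observation that $P^\pi(\{1\})$ is the projection onto $G$-invariants and hence vanishes under the no-invariant-vectors hypothesis is exactly the justification that step needs.
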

\begin{proof}
Let $\omega$ be a non-constant positive definite spherical function. Let $\pi_\omega$ be the unique irreducible unitary representation of $G$ and
$v_{\omega}$ the unique (up to scalar)  $K$-fixed cyclic unit vector (see \cite[Theorem 8.4.8]{Wolf})
satisfying $\omega(g)=\left\langle v_\omega,\pi_\omega(g)v_\omega\right\rangle$. 

Since $F$ is a universal pointwise bound, for every $\omega\in\Sigma^{+}$
we have %
\[
\left|\omega\left(g^{-1}\right)\right|=\left|\left\langle v_{\omega},\pi_{\omega}\left(g^{-1}\right)v_{\omega}\right\rangle \right|=\left|\left\langle \pi_{\omega}\left(g\right)v_{\omega},v_{\omega}\right\rangle \right|\leq F\left(g\right).
\]
Thus $\left|\hat{f}\left(\omega\right)\right|=\left|\int_{G} f\left(g\right)\omega\left(g^{-1}\right)dg\right|\leq\int_{G}\left|f\left(g\right)\right|F\left(g\right)dg$
for every $\omega\in\Sigma^{+}$. Substituting this into equation \eqref{eq:norm_est_pec_measure}
gives,
\begin{eqnarray*}
\norm{\pi\left(f\right)}^{2} & = & \sup_{\norm v=1}\int_{\Sigma^{+}}\left|\hat{f}\left(\omega\right)\right|^{2}dP_{v,v}^{\pi}\left(\omega\right)\\
 & \leq & \sup_{\norm v=1}\int_{\Sigma^{+}}\left(\int_{G}\left|f\left(g\right)\right|F\left(g\right)dg\right)^{2}dP_{v,v}^{\pi}\left(\omega\right)\\
 & = & \left(\int_{G}\left|f\left(g\right)\right|F\left(g\right)dg\right)^{2}.
\end{eqnarray*}

\end{proof}

\subsection{The integrability bound}
The error estimate in the lattice point counting problem provided by Duke, Rudnick and Sarnak can be derived using the following argument. 
%We now turn to consider universal pointwise bounds in detail in 
%the case $G=\SL{n+1}$, for $n\geq2$.
%We will present such functions in §\ref{sub:Pointwise-uniform-bounds}.
%The functions we will present at §\ref{sub:Pointwise-uniform-bounds},
Suppose that each non-trivial positive definite spherical function $\omega$ on $G$ 
belong to $L^{p+\eta}(G) $ for
any $\eta>0$, and that the $L^{p+\eta}(G)$-norm of $\omega$ is uniformly bounded. This is certainly the case if there exists a universal pointwise bound $F$ which is in $L^{p+\eta}(G)$ for every $\eta > 0$. 
% Hence all positive definite spherical functions
%of $\SL{n+1}$, belong to $L^{p+\varepsilon}\left(\SL{n+1}\right)$
%for $p=2n$ and any $\varepsilon>0$. As a remark we like to add that
%this is one of the main facts that was utilized in the proof of \cite[theorem 3.1]{DRS}.
%Let $G=\SL{n+1}$ and $\Gamma$ any lattice as in §\ref{sub:Statement_of_results}.
Then, applying \eqref{eq:norm_estimate} to $\pi_{\nicefrac{G}{\Gamma}}^{0}$
and $b_{T}^{\tau}$, we get the following inequality:
\begin{eqnarray*}
\norm{\pi_{\nicefrac{G}{\Gamma}}^{0}\left(b_{T}^{\tau}\right)\left(\tilde{f}\right)}_{L^{2}\left(\gng\right)} & \leq & \norm{\pi_{\nicefrac{G}{\Gamma}}^{0}\left(b_{T}^{\tau}\right)}\norm{\tilde{f}}_{L^{2}\left(\gng\right)}\\
 & \leq & \left(\frac{1}{\Vol\left(B_{T}^{\tau}\right)}\int_{B_{T}^{\tau}}F\left(g\right)dg\right)\norm{\tilde{f}}_{L^{2}\left(\gng\right)}.
\end{eqnarray*}
This inequality gives us a quantitative mean ergodic theorem. Hence
the conditions mentioned in §\ref{sec:spectral} are met,
and we can apply \cite[Theorem 1.9]{GN} to write, for $T\ge T_0 >0$ :
\begin{equation}
\left|\frac{\left|B_{T}^{\tau}\cap\Gamma\right|}{\Vol\left(B_{T}^{\tau}\right)}-1\right|<C \left(\frac{1}{\Vol\left(B_{T}^{\tau}\right)}\int_{B_{T}^{\tau}}F\left(g\right)dg\right)^{\frac{1}{1+d}} \,.\label{eq:Estimate}
\end{equation}
with $d=\dim\left(\nicefrac{G}{K}\right)=\frac{\left(n+1\right)\left(n+2\right)}{2}-1$ (see e.g.  \cite[remark 1.10]{GN}).

Thus the quality of the error estimate depends on the upper bound for the integral. One possibility is to use the $L^{p+\eta}(G)$ integrability condition for $F$, so that using H\"older's inequality :
$$\le C_\eta \left(\frac{1}{\Vol\left(B_{T}^{\tau}\right)}\right)^{\frac{1}{p(d+1)}-\eta}\norm{F}_{L^{p+\eta}(G)}^{1/(d+1)}\,,
$$
where $C_\eta$ is a computable positive constant. This estimate, namely $\kappa_0=\frac{1}{p(d+1)}$ is the one established in \cite{DRS}, using the fact that for $G=SL(n+1,\mathbb{R})$ the exponent of integrability is $p=2n$. 

In the next section we will focus on $SL({n+1},\mathbb{R})$ and show how to derive a better upper estimate for the integral 
$\int_{B_{T}^{\tau}}F\left(g\right)dg$, for certain functions $F$, thus improving the error estimate arising from the exponent of integrability of $F$.

\section{Universal pointwise bounds for $\SL{n+1}$\label{sub:Pointwise-uniform-bounds}}

\subsection{Universal pointwise bounds}
Let $G=\SL{n+1}$, $K=SO\left(n+1,\mr\right)$ and $F\in C_{c}\left(G\right)$
 a bi-K-invariant function. By the integration formula \eqref{eq:IntegralFormula}
we have,
\begin{eqnarray*}
\int_{G}F\left(g\right)dg & = & \int_{K\times\mathfrak{a}^{+}\times K}F\left(k_{1}\exp\left(H\right)k_{2}\right)\prod_{\alpha\in\Phi^{+}}\sinh\left(\alpha\left(H\right)\right)dk_{1}dHdk_{2}\\
 & = & \int_{\mathfrak{a}^{+}}F\left(\exp\left(H\right)\right)\prod_{\alpha\in\Phi^{+}}\sinh\left(\alpha\left(H\right)\right)dH.
\end{eqnarray*}
Hence we can consider $F$ to be a function on $\ma^{+}$.
The functions we will use as universal bounds for the positive definite spherical
functions for $\left(G,K\right)$, will all have the following 
form. For all $H\in\ma^{+}$ 
\begin{equation}\label{universal-bound}
F_\theta \left(H\right)=P\left(H\right)e^{-\theta\left(H\right)}\text{ for some }\theta\in\ma^{*}\,\, \text{ where }\theta (H)> 0 \,\,\,\forall H\in\ma^{+}.
%F^{\theta}\left(H\right),\label{eq:bound_func_form}
\end{equation}
Here $P$ is a positive function which can be bounded by a polynomial function in $\norm{H}$. 
%\[
%F^{\theta}\left(H\right)=e^{-\theta\left(H\right)}\text{ for some }\theta\in\ma^{*}\,\, \text{ where }\theta (H)> 0 \,\,\,\forall H\in\ma^{+}.
%\]

Three distinct functions which can serve as bounds for the
positive definite spherical functions of $\left(G,K\right)$ are, first, a suitable root of the Harish Chandra $\Xi_G$-function, (see e.g. \cite{CHH}),
second, a function constructed by Howe and Tan (see \cite[theorem 3.3.12]{HoweTan}), and third, a sharper version of it constructed by Oh (see \cite{Oh}). Let us turn to describe them in greater detail. 

\subsubsection{Harish-Chandra's $\Xi_G$-function} It is a well-known fact that for a suitable $n=n_G$ the function $\Xi_{G}^{\frac{1}{n}}$ is a bound
for the non-constant positive definite spherical functions of $\left(G,K\right)$, see the discussion in 
\cite{CHH}. Furthermore the Harish-Chandra function satisfies (see \cite[Theorem 4.6.4]{GV})
\[
\Xi_{G}\left(e^{H}\right)\leq C\left(1+\norm H\right)^{\left|\Phi^{+}\right|}e^{-\rho\left(H\right)},\hspace{1em}\forall H\in\ma^{+}.
\]
 For $H\in\ma^{+}$, write $H=\dm\left(h_{1},...,h_{n+1}\right)$,
where $h_{i}\geq h_{i+1}$ and $\sum_{i=1}^{n+1}h_{i}=0$. Then the first universal pointwise bound is given by 
\[
F_{\rho/n}(H)= C\left(1+\norm H\right)^{\left|\Phi^{+}\right|}e^{-\frac{\rho\left(H\right)}{n_G}}%\leq C\left(1+\sqrt{n+1}h_{1}\right)^{\left|\Phi^{+}\right|}e^{-\frac{\rho\left(H\right)}{n}}\,
\]
so that the linear function $\theta$ in this case is $\rho/n_G$. The constant $n_G$ has been computed explicitly for all simple Lie groups of real rank at least two, and can be taken at the least integer $k$ such that all non-constant positive-definite spherical functions on $G$ are in $L^{2k+\eta}(G)$ for every $\eta > 0$. For $G=SL(n+1,\mathbb{R})$ it is equal to $n$. 

\subsubsection{Howe-Tan's function.}
Howe and Tan \cite[theorem 3.3.12]{HoweTan} showed that the bi-K-invariant function : 
\begin{eqnarray*}
k_{1}\exp\left(H\right)k_{2}\,\,\, \mapsto\,\,\, \min_{i\neq j}\,\,\Xi_{\SL 2}\left(\exp\begin{pmatrix}\frac{h_{i}-h_{j}}{2} & 0\\
0 & \frac{h_{j}-h_{i}}{2}
\end{pmatrix}\right),
\end{eqnarray*}
is a bound for all the non-constant positive definite spherical functions of $\left(G,K\right)$.
We recall that ${\displaystyle \Xi_{\SL 2}\left(\begin{pmatrix}a & 0\\
0 & a^{-1}
\end{pmatrix}\right)\sim\frac{\log a}{a}},\hspace{1em}a>0$. Using this asymptotic of $\Xi_{\SL 2}$, and the fact that $h_{i}\geq h_{i+1}$
we conclude that a second universal pointwise bound is provided by the function $F_{\beta/2}$ given by  
\[
%\Psi\left(k_{1}\exp\left(H\right)k_{2}\right) 
F_{\beta/2}(H)= C\cdot\beta\left(H\right)\cdot e^{-\frac{1}{2}\beta\left(H\right)},
\]
where $\beta=\sum_{i=1}^{n}\alpha_{i}$ is the highest root, $\beta\left(H\right)=h_{1}-h_{n+1}$.
%Thus in \eqref{eq:bound_func_form} we can take $\gamma=\frac{1}{2}\beta$.

\subsubsection{Oh's function}
Using the same spectral approach more efficiently, by utilizing strongly orthogonal systems, Oh \cite{Oh} showed that in (\ref{universal-bound}) we can take the linear functional $\theta=\gamma$ given explicitly as follows. 

\[
\gamma=\begin{cases}
\frac{1}{2}\left(\sum_{i=1}^{\nicefrac{\left(n-1\right)}{2}}i\alpha_{i}+\sum_{i=\nicefrac{\left(n+1\right)}{2}}^{n}\left(n+1-i\right)\alpha_{i}\right) & n\mbox{ odd}\mbox{ }\\
\frac{1}{2}\left(\sum_{i=1}^{\nicefrac{n}{2}}i\alpha_{i}+\frac{n}{2}\alpha_{\nicefrac{n}{2}+1}+\sum_{i=\nicefrac{n}{2}+2}^{n}\left(n+1-i\right)\alpha_{i}\right) & n\mbox{ even}.
\end{cases}
\]
Thus a third universal pointwise bound is $F_\gamma(H)=P(H)e^{-\gamma(H)}$, 
where $P(H)$ is bounded by an explicit polynomial in $\norm{H}$. 

\begin{rem}
\begin{enumerate}
\item We note that for $n=2$ namely for $G=SL(3,\mathbb{R})$, the functions constructed by Howe-Tan and by Oh are the same. As we shall see below, it follows that this case is the only one for which we will not achieve an improvement of the error estimate in the lattice point counting problem. 
\item As will become apparent in the next section, all three functions discussed above on $G=SL(n+1,\mathbb{R})$ satisfy that they are in $L^p(G)$ if and only if $p > 2n$. 
\end{enumerate}
\end{rem}
\subsection{Estimating integrals of universal pointwise bounds}
Our task now is to estimate the integral of the universal pointwise bound on norm balls $B_T^\tau$. This amounts to bounding the integral of a polynomial times 
an exponential function on suitable regions of Euclidean space.  

Therefore consider, in Euclidean space $\mathbb{R}^k$,  the region :
$$D=\left\{ \left(t_{1},...,t_{k}\right)\colon\forall i,\hspace{1em}t_{i}\geq0,\hspace{1em}\sum_{i=1}^{k}m_{i}t_{i}\leq S\right\} \,.$$
 We begin with the following   
\begin{lem}
\label{lem:expo_int}Given $k\in\mathbb{N}$ and $0<m_{1}\leq...\leq m_{k}$,
for all $S>0$,
\[
\int_{D}P\left(t_{1},...,t_{k}\right)e^{\sum_{i=1}^{k}t_{i}}dt_{1}...dt_{k}\leq Ce^{\frac{S}{m_{1}}}S^{\deg\left(P\right)+k-1},
\]
for suitable $C$, where $P$ is any polynomial function.  \end{lem}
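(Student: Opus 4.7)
My plan is to proceed by induction on $k$. First, writing $P$ as a sum of monomials and using the triangle inequality, it suffices to treat a single monomial $P(t)=\prod_{i=1}^{k} t_i^{a_i}$ of degree $\sum_i a_i \le \deg P$. The base case $k=1$ reduces to $\int_0^{S/m_1} t^{a_1} e^{t}\,dt$, whose antiderivative is $e^{t}$ times a polynomial of degree $a_1$; this gives the bound $\le C e^{S/m_1} (S/m_1)^{a_1}$, which matches $C' e^{S/m_1}S^{\deg P + k -1}$ when $k=1$ and $\deg P = a_1$.

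For the inductive step I would peel off the variable $t_1$ attached to the smallest weight $m_1$. For each fixed $t_1 \in [0, S/m_1]$, the inner $(k-1)$-fold integral runs over the region $\{(t_2,\dots,t_k):t_i \ge 0,\ \sum_{i \ge 2} m_i t_i \le S - m_1 t_1\}$, with weights $m_2 \le \cdots \le m_k$ and monomial $\prod_{i \ge 2} t_i^{a_i}$ of degree $\deg P - a_1$. The inductive hypothesis bounds this inner integral by $C_1\,e^{(S - m_1 t_1)/m_2}(S-m_1 t_1)^A$, where $A = (\deg P - a_1) + (k-1) - 1 = \deg P - a_1 + k - 2$. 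Multiplying by $t_1^{a_1}\,e^{t_1}$ and integrating over $t_1$ then reduces the problem to a single one-dimensional integral.

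For that one-dimensional estimate, the clean observation is the identity
\[
t_1+(S-m_1 t_1)/m_2 \;=\; S/m_1 \;-\; (S-m_1 t_1)\bigl(1/m_1 - 1/m_2\bigr).
\]
Substituting $s = S - m_1 t_1$ therefore yields
\[
\int_D P(t)\,e^{\sum t_i}\,dt \;\le\; C_2\, e^{S/m_1} \int_0^{S} (S-s)^{a_1}\, e^{-s(1/m_1 - 1/m_2)}\, s^A\,ds.
\]
Since $m_1 \le m_2$, the inner exponential factor is $\le 1$, so the integral is dominated by the Beta-type integral $\int_0^S (S-s)^{a_1} s^A\, ds = B(a_1+1, A+1)\,S^{a_1+A+1}$. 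Since $a_1 + A + 1 = \deg P + k - 1$, this closes the induction with a constant $C$ depending on $k$ and on the $m_i$.

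The main point to be careful about is the exponent bookkeeping in the one-dimensional step: a naive pointwise bound $e^{\sum t_i} \le e^{S/m_1}$ together with the volume estimate $\Vol(D) \asymp S^k$ would produce $e^{S/m_1}S^{\deg P + k}$, which is off by one power of $S$. The identity above is what extracts the full $e^{S/m_1}$ as a uniform prefactor and leaves an integrand dominated pointwise by $(S-s)^{a_1}s^A$; the resulting Beta integral then delivers exactly the claimed exponent $\deg P + k - 1$, independently of whether $m_1 = m_2$ or $m_1 < m_2$.
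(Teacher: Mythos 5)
Your proof is correct, and it is a genuinely different execution of the same core idea.

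Both arguments rest on the same key manipulation: after peeling off the variable $t_1$ attached to the smallest weight $m_1$, one extracts a clean prefactor $e^{S/m_1}$, and the remaining exponential is $\leq 1$ precisely because $m_1 \leq m_i$ for $i\ge 2$. The paper, however, proceeds directly: it first bounds the polynomial crudely on $D$ by $c_1 S^{\deg P}$, then integrates out $t_1$ explicitly inside the iterated integral to produce $e^{\frac{1}{m_1}(S-\sum_{i\geq 2}m_it_i)}$, factors out $e^{S/m_1}$, observes the leftover factor $e^{\sum_{i\ge 2} t_i(1-m_i/m_1)}\le 1$, and finishes by bounding the remaining $(k-1)$-dimensional domain by its volume $O(S^{k-1})$. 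You instead decompose $P$ into monomials, induct on $k$ by applying the hypothesis to the inner $(k-1)$-fold integral over $t_2,\dots,t_k$ (picking up $e^{(S-m_1t_1)/m_2}(S-m_1t_1)^A$), and then close with a one-variable substitution and a Beta-type integral $\int_0^S(S-s)^{a_1}s^A\,ds$. Your route keeps the polynomial inside the integral rather than bounding it early, which gives the sharper per-monomial exponent $\sum_i a_i + k-1$ and avoids the paper's statement $P\vert_D\le c_1 S^{\deg P}$ (which, as written, only holds for $S$ bounded away from $0$ when $P$ has lower-order terms --- a caveat both proofs share at the moment the bound is stated in terms of $\deg P$). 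Your closing remark on why a naive volume-times-sup bound would cost an extra power of $S$ is exactly right, and both proofs recover that power by integrating out a single variable exactly rather than bounding its range.
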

\begin{proof}
Since $P$ is a polynomial function,  there exists $c_{1}>0$ such that
for all $S>0$, $P\mid_{D}\leq c_{1}S^{\deg\left(P\right)}$. Next, consider the following re-parametrization of $D$. Set $0\le t_k\le S/m_k$ and for $1\le j \le k-1$: 
\[
0\leq t_{j}\leq\frac{1}{m_{j}}\left(S-\sum_{i=j+1}^{k}m_{i}t_{i}\right).
\]
This allows us to write,
\begin{eqnarray*}
 &  & \int_{D}P\left(\left(t_{1},...,t_{k}\right)\right)e^{\sum_{i=1}^{k}t_{i}}dt_{1}...dt_{k}\\
 & = & \int_{t_{k}=0}^{\frac{S}{m_{k}}}...\int_{t_{1}=0}^{\frac{1}{m_{1}}\left(S-\sum_{i=2}^{k}m_{i}t_{i}\right)}P\left(t_{1},...,t_{k}\right)e^{\sum_{i=1}^{k}t_{i}}dt_{1}...dt_{k}\\
 & \leq & c_{1}S^{\deg P}\int_{t_{k}=0}^{\frac{S}{m_{k}}}...\int_{t_{1}=0}^{\frac{1}{m_{1}}\left(S-\sum_{i=2}^{k}m_{i}t_{i}\right)}e^{\sum_{i=1}^{k}t_{i}}dt_{1}...dt_{k}\\
 & \leq & c_{1}S^{\deg P}\int_{t_{k}=0}^{\frac{S}{m_{k}}}...\int_{t_{2}=0}^{\frac{1}{m_{2}}\left(S-\sum_{i=3}^{k}m_{i}t_{i}\right)}\left(e^{\frac{1}{m_{1}}\left(S-\sum_{i=2}^{k}m_{i}t_{i}\right)}-1\right)e^{\sum_{i=2}^{k}t_{i}}dt_{2}...dt_{k}\\
 & \leq & c_{1}S^{\deg P}e^{\frac{S}{m_{1}}}\int_{t_{k}=0}^{\frac{S}{m_{k}}}...\int_{t_{2}=0}^{\frac{1}{m_{2}}\left(S-\sum_{i=3}^{k}m_{i}t_{i}\right)}e^{\sum_{i=2}^{k}t_{i}\left(1-\frac{m_{i}}{m_{1}}\right)}dt_{2}...dt_{k}\\
 & \leq & c_{1}S^{\deg P}e^{\frac{S}{m_{1}}}\int_{t_{k}=0}^{\frac{S}{m_{k}}}...\int_{t_{2}=0}^{\frac{1}{m_{2}}\left(S-\sum_{i=3}^{k}m_{i}t_{i}\right)}1dt_{2}...dt_{k}\leq Ce^{\frac{S}{m_{1}}}S^{\deg P+k-1}.
\end{eqnarray*}
\end{proof}
Let us now apply this fact to the integrals we are interested in bounding. As in \S\ref{sub:Statement_of_results}, we let $\tau$ denote an irreducible representation associated with a dominant weight $\lambda$, which is its highest weight. 
\begin{cor}
Let $G=\SL{n+1}$, and $B_{T}^{\tau}$ be as in \S\ref{sub:Statement_of_results}.
%Let $A$ be as in §\ref{sub:Lie-algebras}.
 If $F$ is a bi-K-invariant
function and $F\mid_{\ma^{+}}$ is of the form $F_\theta\left(H\right)=P\left(H\right)e^{-\theta \left(H\right)}$, then 
\[
\int_{B_{T}^{\tau}}F\left(g\right)dg\leq C_{1}^{\prime}\left(\log T\right)^{l^{\prime}}T^{\frac{1}{m_{1}^{\prime}}}\,,
\]
where $l^{\prime}\in\mathbb{N},C_{1}^{\prime}>0$. Furthermore, setting $\psi=2\rho-\theta$, the following formula for $m_{1}^{\prime}$ holds :

\begin{equation}
{\displaystyle m_{1}^{\prime}=\min_{1\leq i\leq n}\left(\frac{\lambda\left(\tilde{\beta_{i}}\right)}{\psi\left(\tilde{\beta_{i}}\right)}\right)}.\label{eq:m1Prime}
\end{equation}
\end{cor}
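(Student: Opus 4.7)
The plan is to combine the $KAK$ integration formula \eqref{eq:IntegralFormula} with a uniform upper bound on the Jacobian $\prod_{\alpha\in\Phi^+}\sinh(\alpha(H))$, and then reduce the resulting Euclidean integral to the setting of Lemma~\ref{lem:expo_int}. Since both $F$ and $B_T^\tau$ are bi-$K$-invariant, formula \eqref{eq:IntegralFormula} yields
\[
\int_{B_T^\tau} F(g)\,dg \;=\; \int_{\ma^+\cap\,\log B_T^\tau} P(H)\,e^{-\theta(H)}\,\prod_{\alpha\in\Phi^+}\sinh(\alpha(H))\,dH,
\]
and using $\sinh(x)\le\tfrac{1}{2}e^{x}$ for $x\ge 0$ together with $\sum_{\alpha\in\Phi^+}\alpha=2\rho$, the Jacobian is dominated by a constant times $e^{2\rho(H)}$ on $\ma^+$. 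Consequently the integrand is majorized by $C\,P(H)\,e^{\psi(H)}$ with $\psi=2\rho-\theta$.

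Next I would identify the region of integration. The hypothesis $\tau(K)\subset SO(N)$ and the weight decomposition of $\tau$ give, for $H\in\ma^+$, the identity $\norm{\exp(H)}_\tau^{2}=\mbox{tr}(\tau(\exp(2H)))=\sum_\mu d_\mu e^{2\mu(H)}$, and hence the two-sided bound $e^{\lambda(H)}\le\norm{\exp(H)}_\tau\le\sqrt{N}\,e^{\lambda(H)}$, where $\lambda$ is the highest weight of $\tau$. Parametrising $\ma^+$ by $H=\sum_{i=1}^n t_i\,\tilde{\beta}_i$ with $t_i=\alpha_i(H)\ge 0$, the condition $\exp(H)\in B_T^\tau$ therefore forces the region to lie inside the simplex $\{t_i\ge 0,\,\sum_i t_i\,\lambda(\tilde{\beta}_i)\le \log T+O(1)\}$, on which $\norm{H}=O(\log T)$ and hence $P(H)\le c\,(\log T)^{q}$ for some $q$ depending only on $P$.

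Finally I would perform the rescaling $u_i=t_i\,\psi(\tilde{\beta}_i)$ for $1\le i\le n$; this is legitimate since a direct inspection of the three universal bounds $F_{\rho/n_G}$, $F_{\beta/2}$ and $F_\gamma$ of \S\ref{sub:Pointwise-uniform-bounds} shows $\psi(\tilde{\beta}_i)>0$ in each case. After the change of variables the exponential factor becomes $e^{\sum_i u_i}$ and the constraint reads $\sum_{i=1}^n m_i\,u_i\le\log T+O(1)$ with $m_i=\lambda(\tilde{\beta}_i)/\psi(\tilde{\beta}_i)>0$. Lemma~\ref{lem:expo_int} applied with $S=\log T+O(1)$ then produces
\[
\int_{B_T^\tau} F(g)\,dg\;\le\;C\,(\log T)^{q+n-1}\,e^{(\log T)/\min_i m_i}\;=\;C_1'\,(\log T)^{l'}\,T^{1/m_1'},
\]
with $m_1'=\min_i \lambda(\tilde{\beta}_i)/\psi(\tilde{\beta}_i)$, which matches \eqref{eq:m1Prime}.

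The most delicate step is the passage from the ball $B_T^\tau$ to the simplex: one must verify that the subleading weight contributions in $\sum_\mu d_\mu e^{2\mu(H)}$ perturb the bound $\lambda(H)\le\log T$ only by an additive $O(1)$, so that this additive error enters only the polylogarithmic prefactor and not the exponent $1/m_1'$. The remaining steps are mechanical and reduce matters to a single application of Lemma~\ref{lem:expo_int}.
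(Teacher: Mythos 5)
Your argument is correct and follows essentially the same route as the paper's proof: both reduce to the $K A^+ K$ integration formula, dominate the Jacobian by $e^{2\rho(H)}$ using $\sinh(x)\le e^{x}$, enclose $\mathfrak{a}^+(T,\tau)$ in the simplex $\{\lambda(H)\le\log T+O(1)\}$ by comparing $\norm{\exp(H)}_\tau$ with $e^{\lambda(H)}$, change variables so the exponent becomes $\sum_i u_i$, and invoke Lemma~\ref{lem:expo_int}. The only cosmetic differences are that you split the paper's single linear change of variables $H=\sum_j t_j\tilde\beta_j/\psi(\tilde\beta_j)$ into two steps, and you spell out the two-sided comparison with $e^{\lambda(H)}$ and the positivity of $\psi(\tilde\beta_i)$, which the paper leaves implicit.
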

Extending the definition of $I(\lambda)$ in \S 2.2 above, let us denote by $I^{\prime}(\lambda)$ the set
of minimizing indices in the equation above. This set depends on the functional $\theta$ chosen to define the universal estimate, but we will suppress this dependence in the notation.  
\begin{proof}
By equation \eqref{eq:IntegralFormula} we have,
\[
\int_{B_{T}^{\tau}}F\left(g\right)dg=\int_{\mathfrak{a}^{+}\left(T,\tau\right)}F\mid_{\ma^{+}}\left(H\right)\prod_{\alpha\in\Phi^{+}}\sinh\left(\alpha\left(H\right)\right)dH,
\]
where $\mathfrak{a}^{+}\left(T,\tau\right)=\left\{ H\in\mathfrak{a}^{+}\colon\norm{\tau\left(\exp\left(H\right)\right)}\leq T\right\} $.
Let $\lambda$ be the highest weight of $\tau$. Comparing with the
max-norm, we find that there is $\tilde{C}$ such that for all $T>0$,
\begin{eqnarray*}
\mathfrak{a}^{+}\left(T,\tau\right) & \subset & \left\{ H\in\mathfrak{a}^{+}\colon e^{\lambda\left(H\right)}\leq\frac{T}{\tilde{C}}\right\} \\
 & = & \left\{ H\in\mathfrak{a}^{+}\colon\lambda\left(H\right)\leq\log T-\log\tilde{C}\right\} .
\end{eqnarray*}
We denote $D_{T,\tau}=\left\{ H\in\mathfrak{a}^{+}\colon\lambda\left(H\right)\leq\log T-\log\tilde{C}\right\} $.
Since $\sinh\left(x\right)\leq e^{x}$, we have the following inequality,
\begin{eqnarray*}
\int_{B_{T}^{\tau}}F_\theta\left(g\right)dg & = & \int_{\mathfrak{a}^{+}\left(T,\tau\right)}P\left(H\right)e^{-\theta\left(H\right)}\prod_{\alpha\in\Phi^{+}}\sinh\left(\alpha\left(H\right)\right)dH\\
 & \leq & C_1\int_{D_{T,\tau}}P\left(H\right)e^{\left(2\rho-\theta\right)\left(H\right)}dH
\end{eqnarray*}
Let ${\displaystyle H=\sum_{j=1}^{n}t_{j}\frac{\tilde{\beta_{j}}}{\psi\left(\tilde{\beta_{j}}\right)}}$.
In terms of these coordinates we have (recall $\psi=2\rho-\theta$): 
\[
D_{T,\tau}=\left\{ \left(t_{1},...,t_{n}\right)\colon\forall i,\hspace{1em}t_{i}\geq0,\hspace{1em}\sum_{j=1}^{n}\frac{\lambda\left(\tilde{\beta_{j}}\right)}{\psi\left(\tilde{\beta_{j}}\right)}t_{j}\leq\log T-\log\tilde{C}\right\} ,
\]
and 
\[
\int_{B_{T}^{\tau}}F_\theta\left(g\right)dg\leq C_2\int_{D_{T,\tau}}P\left(t_{1},...,t_{n}\right)e^{\sum_{j=1}^{n}t_{j}}\prod_{j=1}^{n}dt_{j}.
\]
Thus using Lemma \ref{lem:expo_int} we find that,
\[
\int_{B_{T}^{\tau}}F_\theta\left(g\right)dg\leq C_1^\prime\left(\log T\right)^{l^{\prime}}T^{\frac{1}{m_{1}^{\prime}}}.
\]
With ${\displaystyle m_{1}^{\prime}=\min_{1\leq j\leq n}\frac{\lambda\left(\tilde{\beta_{j}}\right)}{\psi\left(\tilde{\beta_{j}}\right)}}$,
$C_1^\prime>0$ and $l^{\prime}\in\mathbb{N}$.
\end{proof}
Recall the parameter $m_1$ defined in equation (\ref{eq:m1}). Given $m_1$ and $m_1^\prime$ (which depends also on the choice of the functional $\theta$ defining the universal pointwise bound),  and the parameter $\kappa_0=\kappa_0(n)$, and using the foregoing formula we can rewrite equation \eqref{eq:Estimate} and state our main error estimate as follows. 
%Now by definition of $\kappa_0$, using the foregoing formula 
%we can 
\begin{cor} 
\begin{equation}
\frac{\left|B_{T}^{\tau}\cap\Gamma\right|}{\Vol\left(B_{T}^{\tau}\right)}=1+O\left(\Vol\left(B_{T}^{\tau}\right)^{-2n\left(1-\frac{m_{1}}{m_{1}^{\prime}}\right)\kappa_{0}}\left(\log T\right)^{q}\right),\label{eq:RealEstimate}
\end{equation}
for some $q\in\mathbb{N}$ and $T \ge T_0$ that can be computed explicitly. 
\end{cor}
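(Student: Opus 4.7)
The plan is to combine the three ingredients already assembled in the excerpt: the general error estimate \eqref{eq:Estimate} coming from the quantitative mean ergodic theorem, the volume asymptotic $\Vol(B_T^\tau)\sim C_1(\log T)^l T^{1/m_1}$ of \S\ref{sub:Volumes}, and the integral bound $\int_{B_T^\tau}F(g)\,dg\le C_1'(\log T)^{l'}T^{1/m_1'}$ from the preceding Corollary. The entire argument is an arithmetic identification of exponents; no further analytic input is needed.

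First I would divide the integral bound by the volume asymptotic to get, for $T\ge T_0$,
\[
\frac{1}{\Vol(B_T^\tau)}\int_{B_T^\tau}F(g)\,dg\;\le\; C\,(\log T)^{l'-l}\,T^{\,1/m_1'-1/m_1}.
\]
Here the key sign observation is that $\theta$ takes positive values on $\ma^+$, hence $\psi=2\rho-\theta$ satisfies $\psi(\tilde\beta_j)<2\rho(\tilde\beta_j)$ for every $j$, so that $m_1'>m_1$ and the exponent $1/m_1'-1/m_1$ is strictly negative. This is what will ultimately produce a power saving.

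Next I would convert the factor $T^{1/m_1'-1/m_1}$ into a negative power of the volume. Writing
\[
T^{\,1/m_1'-1/m_1}=\bigl(T^{1/m_1}\bigr)^{\,m_1/m_1'-1}
\]
and using $T^{1/m_1}\asymp \Vol(B_T^\tau)\,(\log T)^{-l}$ from the volume asymptotic, we obtain
\[
\frac{1}{\Vol(B_T^\tau)}\int_{B_T^\tau}F(g)\,dg\;\le\; C'\,\Vol(B_T^\tau)^{-(1-m_1/m_1')}\,(\log T)^{q_0},
\]
for some explicit $q_0\in\mathbb N$ built from $l,l'$ and $m_1/m_1'$.

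Finally I would substitute this into \eqref{eq:Estimate}, which raises the right-hand side to the power $1/(1+d)$. Since $d=\tfrac{(n+1)(n+2)}{2}-1$, we have $d+1=\tfrac{(n+1)(n+2)}{2}$, and together with $\kappa_0=\tfrac{1}{n(n+1)(n+2)}$ this gives the identity $\tfrac{1}{d+1}=2n\kappa_0$. Therefore the exponent of $\Vol(B_T^\tau)$ in the resulting error term is exactly $-2n\kappa_0\bigl(1-m_1/m_1'\bigr)$, with a logarithmic correction of order $q=q_0/(d+1)$ (bounded above by an explicit integer power after absorbing the multiplicative constants). This is precisely \eqref{eq:RealEstimate}. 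The entire step is routine bookkeeping; the only conceptual point to check is the positivity of $\psi$ on $\ma^+$, which guarantees that the gain $1-m_1/m_1'$ is strictly positive so the estimate really is an improvement.
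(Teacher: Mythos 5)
Your proposal is correct and is essentially the argument the paper implicitly uses: substitute the volume asymptotic $\Vol(B_T^\tau)\sim C_1(\log T)^lT^{1/m_1}$ and the integral bound $\int_{B_T^\tau}F\le C_1'(\log T)^{l'}T^{1/m_1'}$ into \eqref{eq:Estimate}, convert powers of $T$ into powers of $\Vol(B_T^\tau)$, and use the identity $\tfrac{1}{d+1}=\tfrac{2}{(n+1)(n+2)}=2n\kappa_0$ to read off the exponent $-2n(1-m_1/m_1')\kappa_0$. The additional observation that $\theta>0$ on $\ma^+$ forces $m_1'>m_1$ (so the exponent is genuinely negative) is a useful sanity check, though not formally part of the corollary's statement.
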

This asymptotic formula gives a solution to the lattice point counting problem in $\left|B_{T}^{\tau}\cap\Gamma\right|$, with exponent $\kappa=\kappa(\tau)=2n\left(1-\frac{m_{1}}{m_{1}^{\prime}}\right)\kappa_{0}$. 
The exponent $\kappa(\tau)$ is determined by the underlying representation $\tau$ and the functional $\theta$ defining the universal pointwise estimate $F_\theta$.
Our next task therefore is to estimate the exponent just described as $\tau$ ranges over the set of irreducible finite-dimensional representations, namely over the space of dominant weights, and establish when does $\kappa(\tau)$  constitute an improvement over $\kappa_0$.  

\section{Improving the error estimates}
\subsection{Comparing exponents}
In order to estimate the exponent from equation \eqref{eq:RealEstimate}
for various irreducible representations, let us note the following. The largest exponent is achieved when using the functional $\gamma$ described above,
and we will presently give conditions on the irreducible representations for which 
this largest exponent can be established.

First, given a dominant weight $\lambda$ recall the set $I=I(\lambda)$ of minimizing indices defined in \S \ref{sub:Volumes}. 
\begin{prop}
\label{prop:m1/m1'_lower_bound} Let $G=\SL{n+1}$, $\lambda\in\Lambda^{+}$
and $\theta\in\ma^{*}$. For every $i\in I=I(\lambda)$ we have ${\displaystyle \frac{m_{1}}{m_{1}^{\prime}}\geq\frac{\psi\left(\tilde{\beta}_{i}\right)}{2\rho\left(\tilde{\beta}_{i}\right)}}$,
so in particular ${\displaystyle {\displaystyle \frac{m_{1}}{m_{1}^{\prime}}}\geq\min_{1\leq j\leq n}\frac{\psi\left(\tilde{\beta}_{j}\right)}{2\rho\left(\tilde{\beta}_{j}\right)}}$.\end{prop}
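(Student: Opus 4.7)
The plan is to observe that the claimed inequality is essentially immediate from the two defining formulas for $m_1$ and $m_1^\prime$, once we notice that $I(\lambda)$ is precisely the set where the minimum in $m_1$ is attained. So there is no hard combinatorial content here; the proof is a one-line manipulation, and the main task is simply to verify positivity of all quantities involved so that dividing inequalities is legitimate.

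More concretely, I would proceed as follows. Fix $i\in I=I(\lambda)$. By definition of $I(\lambda)$ from \S\ref{sub:Volumes},
\[
m_1 \;=\; \frac{\lambda(\tilde\beta_i)}{2\rho(\tilde\beta_i)}.
\]
On the other hand, the definition of $m_1^\prime$ from \eqref{eq:m1Prime} as a minimum over $1\le j\le n$ immediately gives the upper bound
\[
m_1^\prime \;\le\; \frac{\lambda(\tilde\beta_i)}{\psi(\tilde\beta_i)},
\]
specializing $j=i$. The first step is to note that $\lambda(\tilde\beta_i)>0$ (since $\lambda$ is a nonzero dominant weight and $\tilde\beta_i\in\ma^+$), that $2\rho(\tilde\beta_i)>0$, and that $\psi(\tilde\beta_i)>0$ (this is where one uses that the universal pointwise bound $F_\theta$ is integrable on $B_T^\tau$, so that in the relevant cases $\psi=2\rho-\theta$ is strictly positive on $\ma^+$; in particular this holds for the three choices of $\theta\in\{\rho/n_G,\beta/2,\gamma\}$ considered earlier).

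Once positivity is in hand, I would simply divide: combining the equality for $m_1$ and the inequality for $m_1^\prime$,
\[
\frac{m_1}{m_1^\prime} \;\ge\; \frac{\lambda(\tilde\beta_i)/2\rho(\tilde\beta_i)}{\lambda(\tilde\beta_i)/\psi(\tilde\beta_i)} \;=\; \frac{\psi(\tilde\beta_i)}{2\rho(\tilde\beta_i)},
\]
which is the desired inequality for each $i\in I$. The ``in particular'' clause follows by observing that the minimum over all $1\le j\le n$ of $\psi(\tilde\beta_j)/2\rho(\tilde\beta_j)$ is no larger than its value at any particular $i\in I$.

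Since this argument is entirely formal, I do not anticipate any genuine obstacle; the only subtlety worth flagging is the verification that $\psi(\tilde\beta_i)>0$ for the specific functionals $\theta$ used in \S\ref{sub:Pointwise-uniform-bounds}, which is where the input that $\theta$ is dominated by $2\rho$ on $\ma^+$ actually enters.
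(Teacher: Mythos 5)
Your proof is correct and takes essentially the same route as the paper: use that $i\in I(\lambda)$ gives $m_1=\lambda(\tilde\beta_i)/2\rho(\tilde\beta_i)$, use the definition of $m_1'$ as a minimum to get $m_1'\le\lambda(\tilde\beta_i)/\psi(\tilde\beta_i)$, and divide. The paper leaves the positivity of $\psi(\tilde\beta_i)$ (and of $m_1'$) implicit, so flagging it as you do is a harmless but reasonable refinement.
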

\begin{proof}
By definition, for any $1\le i\le n$ 
$${\displaystyle m_{1}^{\prime}=\min_{1\leq j\leq n}\left(\frac{\lambda\left(\tilde{\beta_{j}}\right)}{\psi\left(\tilde{\beta_{j}}\right)}\right)\leq\frac{\lambda\left(\tilde{\beta_{i}}\right)}{\psi\left(\tilde{\beta_{i}}\right)}}\,.$$
Thus for every $i\in I=I(\lambda)$ 
\[
\frac{m_{1}}{m_{1}^{\prime}}=\frac{\lambda\left(\tilde{\beta}_{i}\right)}{2\rho\left(\tilde{\beta}_{i}\right)}\left(m_{1}^{\prime}\right)^{-1}\geq\frac{\lambda\left(\tilde{\beta}_{i}\right)}{2\rho\left(\tilde{\beta}_{i}\right)}\frac{\psi\left(\tilde{\beta}_{i}\right)}{\lambda\left(\tilde{\beta}_{i}\right)}=\frac{\psi\left(\tilde{\beta}_{i}\right)}{2\rho\left(\tilde{\beta}_{i}\right)}.
\]

\end{proof}

We can now easily deduce that using the functional $\rho/n_G$ as a universal pointwise bound does not improve the error estimate.   
\begin{prop}
Let $\lambda\in\Lambda^{+}$, and let $\tau$ be the irreducible representation
of $\SL{n+1}$ associated with $\lambda$. Then the 
exponent associated with $\tau$  and the linear functional $\frac{\rho}{n_G}$ is equal to $\kappa_{0}$.
\end{prop}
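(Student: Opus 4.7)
The plan is to exploit the fact that when the functional $\theta$ is a scalar multiple of $\rho$, the corresponding $\psi = 2\rho - \theta$ is also a scalar multiple of $\rho$, and so the minimizing problem defining $m_1^\prime$ reduces to the one defining $m_1$. In particular, the set of minimizing indices $I^\prime(\lambda)$ coincides with $I(\lambda)$, and the ratio $m_1/m_1^\prime$ becomes a fixed rational number depending only on $n$.

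Concretely, I would proceed as follows. For $G = \SL{n+1}$ we have $n_G = n$, so set $\theta = \rho/n$. Then
\[
\psi = 2\rho - \theta = \left(2 - \frac{1}{n}\right)\rho = \frac{2n-1}{n}\,\rho.
\]
Substituting this into formula \eqref{eq:m1Prime} and factoring out the scalar $n/(2n-1)$, one obtains, for every $1 \leq j \leq n$,
\[
\frac{\lambda(\tilde\beta_j)}{\psi(\tilde\beta_j)} = \frac{n}{2n-1}\cdot\frac{\lambda(\tilde\beta_j)}{\rho(\tilde\beta_j)} = \frac{2n}{2n-1}\cdot\frac{\lambda(\tilde\beta_j)}{2\rho(\tilde\beta_j)}.
\]
Taking the minimum over $j$ and using the definitions \eqref{eq:m1} and \eqref{eq:m1Prime} gives
\[
m_1^\prime = \frac{2n}{2n-1}\,m_1, \qquad \text{so} \qquad \frac{m_1}{m_1^\prime} = \frac{2n-1}{2n}.
\]

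It remains to plug this into the exponent in \eqref{eq:RealEstimate}:
\[
\kappa(\tau) = 2n\left(1 - \frac{m_1}{m_1^\prime}\right)\kappa_0 = 2n \cdot \frac{1}{2n}\cdot \kappa_0 = \kappa_0,
\]
which is the asserted equality. There is essentially no obstacle here; the statement is a direct computation once one notes that $\psi$ is proportional to $\rho$ and that Proposition \ref{prop:m1/m1'_lower_bound} becomes an equality in this case (the inequality there is saturated at any $i \in I(\lambda)$ since then $\lambda(\tilde\beta_i)/\rho(\tilde\beta_i)$ is itself minimal).
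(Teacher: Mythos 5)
Your proof is correct and is essentially the same computation as the paper's: both observe that $\psi = 2\rho - \rho/n$ is a scalar multiple of $\rho$, so the minimization defining $m_1'$ factors through that of $m_1$, giving $m_1/m_1' = 1 - 1/(2n)$ and hence $\kappa = \kappa_0$. The only cosmetic difference is that the paper writes $\psi = 2\rho(1 - \tfrac{1}{2n})$ whereas you write $\psi = \tfrac{2n-1}{n}\rho$; these are the same scalar.
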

\begin{proof}
Indeed in this case $\psi=2\rho-\frac{\rho}{n}=2\rho\left(1-\frac{1}{2n}\right)$.
Thus for any $\lambda\in\Lambda^{+}$, 
\[
m_{1}^{\prime}={\displaystyle \min_{1\leq j\leq n}\left(\frac{\lambda\left(\tilde{\beta_{j}}\right)}{\psi\left(\tilde{\beta_{j}}\right)}\right)=\frac{1}{1-\frac{1}{2n}}\min_{1\leq j\leq n}\left(\frac{\lambda\left(\tilde{\beta_{j}}\right)}{2\rho\left(\tilde{\beta_{j}}\right)}\right)=\frac{1}{1-\frac{1}{2n}}m_{1}}.
\]
This means that $\kappa=2n\left(1-\frac{m_{1}}{m_{1}^{\prime}}\right)\kappa_{0}=2n\left(1-\left(1-\frac{1}{2n}\right)\right)\kappa_{0}=\kappa_{0}$. \end{proof}

The same phenomenon arises with the universal bound defined by the linear functional $\beta/2$, $\beta$ the highest root. 
\begin{prop}
Let $\lambda\in\Lambda^{+}$, and let $\tau$ be the irreducible representation
of $\SL{n+1}$ associated with $\lambda$. Then the
exponent associated with $\tau$ and the linear functional $\frac{1}{2}\beta$ is less then
or equals to $\kappa_{0}$.\end{prop}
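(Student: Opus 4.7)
The plan is to apply Proposition \ref{prop:m1/m1'_lower_bound} with $\theta=\beta/2$, so that $\psi=2\rho-\tfrac{1}{2}\beta$, and then to show that the quantity $\min_{1\le j\le n}\tfrac{\psi(\tilde\beta_j)}{2\rho(\tilde\beta_j)}$ is at least $1-\tfrac{1}{2n}$. From this, the conclusion $\kappa=2n\left(1-\tfrac{m_1}{m_1'}\right)\kappa_0\le 2n\cdot\tfrac{1}{2n}\kappa_0=\kappa_0$ follows immediately from the definition of the exponent.

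The first step is the evaluation of $\beta$ on the dual basis. Since $\beta=\sum_{i=1}^{n}\alpha_i$ is the highest root of $\mathfrak{sl}(n+1,\mathbb{R})$ and $\alpha_i(\tilde\beta_j)=\delta_{ij}$, we get $\beta(\tilde\beta_j)=1$ for every $j$, so $\psi(\tilde\beta_j)=2\rho(\tilde\beta_j)-\tfrac{1}{2}$.

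The second step is to compute $2\rho(\tilde\beta_j)$. Using the explicit realization in which $\tilde\beta_j$ corresponds to the traceless diagonal matrix with the first $j$ entries equal to $\tfrac{n+1-j}{n+1}$ and the last $n+1-j$ entries equal to $-\tfrac{j}{n+1}$, and the standard expression of $\rho$ as one half the sum of the positive roots $e_i-e_k$ (for $i<k$), a short telescoping computation yields $2\rho(\tilde\beta_j)=j(n+1-j)$. This is the only real calculation involved, and it is entirely routine.

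With these two evaluations in hand, for every $1\le j\le n$ one has
\[
\frac{\psi(\tilde\beta_j)}{2\rho(\tilde\beta_j)}=1-\frac{1}{2j(n+1-j)}.
\]
The function $j\mapsto j(n+1-j)$ is concave on $\{1,\dots,n\}$ and attains its minimum value $n$ at the endpoints $j=1$ and $j=n$. Hence the above ratio is at least $1-\tfrac{1}{2n}$ for every $j$, and Proposition \ref{prop:m1/m1'_lower_bound} gives $\tfrac{m_1}{m_1'}\ge 1-\tfrac{1}{2n}$ uniformly in the highest weight $\lambda$. Substituting into the formula for $\kappa$ finishes the proof. The potential obstacle is merely bookkeeping for the value of $2\rho(\tilde\beta_j)$; once that is settled, the argument is a direct parallel of the proof given above for the Harish-Chandra functional $\rho/n_G$, with the identity $\rho/n_G=\rho/n$ replaced by the sharper but still insufficient identity $\beta(\tilde\beta_j)=1$.
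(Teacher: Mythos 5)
Your proof is correct and follows essentially the same route as the paper's: invoke Proposition~\ref{prop:m1/m1'_lower_bound} with $\theta=\tfrac{1}{2}\beta$, compute $\beta(\tilde\beta_j)=1$ and $2\rho(\tilde\beta_j)=j(n+1-j)$, minimize the ratio $\psi(\tilde\beta_j)/2\rho(\tilde\beta_j)$ over $j$ to obtain $1-\tfrac{1}{2n}$, and substitute into $\kappa=2n(1-m_1/m_1')\kappa_0$. The only cosmetic difference is that you derive $2\rho(\tilde\beta_j)=j(n+1-j)$ from the explicit diagonal realization of $\tilde\beta_j$ and a telescoping sum, whereas the paper reads it off from the known expansion $2\rho=\sum_{k=1}^n k(n+1-k)\alpha_k$; both are routine and the argument is otherwise identical.
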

\begin{proof}
Recall that $2\rho=\sum_{k=1}^{n}\alpha_{k}k\left(n+1-k\right)$,
thus $2\rho\left(\tilde{\beta}_{j}\right)=j\left(n+1-j\right)$. We
also have $\beta=\sum_{k=1}^{n}\alpha_{k}$, so that $\beta\left(\tilde{\beta}_{j}\right)=1$.
Now using proposition \ref{prop:m1/m1'_lower_bound}: 
\begin{eqnarray*}
\frac{m_{1}}{m_{1}^{\prime}} & \geq & \min_{1\leq j\leq n}\frac{\psi\left(\tilde{\beta}_{j}\right)}{2\rho\left(\tilde{\beta}_{j}\right)}=\min_{1\leq j\leq n}\left(\frac{2\rho\left(\tilde{\beta}_{j}\right)-\frac{1}{2}\beta\left(\tilde{\beta}_{j}\right)}{2\rho\left(\tilde{\beta}_{j}\right)}\right)\\
 & = & \min_{1\leq j\leq n}\left(1-\frac{1}{4\rho\left(\tilde{\beta}_{j}\right)}\right)=1-\left(\max_{1\leq j\leq n}\frac{1}{4\rho\left(\tilde{\beta}_{j}\right)}\right)\\
 & = & 1-\frac{1}{4\min_{1\leq j\leq n}\rho\left(\tilde{\beta}_{j}\right)}=1-\frac{1}{2n}.
\end{eqnarray*}
This means that $\kappa=2n\left(1-\frac{m_{1}}{m_{1}^{\prime}}\right)\kappa_{0}\leq2n\left(1-\left(1-\frac{1}{2n}\right)\right)\kappa_{0}=\kappa_{0}$.
\end{proof}
Next we consider the universal pointwise bound defined by the linear functional $\gamma$ described above, and show that as we vary  $\lambda\in\Lambda^{+}$ a better error estimate can be established in many cases.

\subsection{Main result : improving the error estimate}

As usual, let $\tau$ be an irreducible representation
of $G=\SL{n+1}$ for $n\geq2$, and let $\lambda$ be the highest
weight. Let $B_{T}^{\tau}=\left\{ g\in G:\norm{\tau\left(g\right)}\leq T\right\} $. $\gamma$ be the functional defined in \S \ref{sub:Pointwise-uniform-bounds} 
and let $F_\gamma$ be the universal pointwise bound defined there. 
Let $m_{1},m_{1}^{\prime}$ be defined by equations \eqref{eq:m1} and \eqref{eq:m1Prime}. Let 
$I=I(\lambda)$ and $I^{\prime}=I^\prime(\lambda)$ be the sets of minimizing indices defining them, with  $I^\prime(\lambda)$ defined by the choice $\theta=\gamma$. These choices give rise to the main result of the paper. 
\begin{thm}
\label{thm:Best_estimate}
Let $G=\SL{n+1}$ for $n\geq2$, let $\Gamma$ any lattice subgroup, and  $T\ge T_0$.  
\begin{enumerate}
\item If n is odd and $\frac{n+1}{2}\in I(\lambda)\cap I^{\prime}(\lambda)$, then 
\[
\frac{\left|B_{T}^{\tau}\cap\Gamma\right|}{\Vol\left(B_{T}^{\tau}\right)}=1+O\left(\Vol\left(B_{T}^{\tau}\right)^{-2\frac{n}{n+1}\kappa_{0}}\left(\log T\right)^{q}\right),
\]
%for some $q\in\mathbb{N}$. 
\item If n is even and $\frac{n}{2}+1\in I(\lambda)\cap I^{\prime}(\lambda)\mbox{ or }\frac{n}{2}\in I(\lambda)\cap I^{\prime}(\lambda)$, 
then
\[
\frac{\left|B_{T}^{\tau}\cap\Gamma\right|}{\Vol\left(B_{T}^{\tau}\right)}=1+O\left(\Vol\left(B_{T}^{\tau}\right)^{-2\frac{n}{n+2}\kappa_{0}}\left(\log T\right)^{q}\right),
\]
%for some $q\in\mathbb{N}$. 
\end{enumerate}
In both cases $q$ and $T_0$ are positive numbers which depends on $n$ but not on the lattice, and can be made explicit. 
\end{thm}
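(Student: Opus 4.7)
The plan is to apply directly the master error formula established in the previous corollary,
\[
\frac{\left|B_{T}^{\tau}\cap\Gamma\right|}{\Vol\left(B_{T}^{\tau}\right)}=1+O\left(\Vol(B_{T}^{\tau})^{-2n(1-m_{1}/m_{1}^{\prime})\kappa_{0}}(\log T)^{q}\right),
\]
taking $\theta=\gamma$, and to compute the ratio $m_{1}/m_{1}^{\prime}$ sharply in terms of a common minimizing index $i\in I(\lambda)\cap I^{\prime}(\lambda)$. The key observation is that if such an $i$ exists then, by the definitions of $m_{1}$ and $m_{1}^{\prime}$ in \eqref{eq:m1} and \eqref{eq:m1Prime},
\[
\frac{m_{1}}{m_{1}^{\prime}}=\frac{\lambda(\tilde{\beta}_{i})/2\rho(\tilde{\beta}_{i})}{\lambda(\tilde{\beta}_{i})/\psi(\tilde{\beta}_{i})}=\frac{\psi(\tilde{\beta}_{i})}{2\rho(\tilde{\beta}_{i})},
\]
so the dependence on the highest weight $\lambda$ drops out completely and the ratio becomes a purely combinatorial quantity built from the root system and the functional $\gamma$.

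Next I would evaluate this combinatorial ratio at the \emph{middle} index, which is the decisive step. Since $\alpha_{i}(\tilde{\beta}_{j})=\delta_{ij}$, the value $\gamma(\tilde{\beta}_{j})$ is simply the coefficient of $\alpha_{j}$ in Oh's expression for $\gamma$, and the identity $2\rho(\tilde{\beta}_{j})=j(n+1-j)$ follows at once from $2\rho=\sum_{k=1}^{n}k(n+1-k)\alpha_{k}$ (which was already recorded in the previous section). For $n$ odd at $j=(n+1)/2$ this gives $\gamma(\tilde{\beta}_{j})=(n+1)/4$ and $2\rho(\tilde{\beta}_{j})=(n+1)^{2}/4$, whence
\[
\frac{\psi(\tilde{\beta}_{j})}{2\rho(\tilde{\beta}_{j})}=1-\frac{\gamma(\tilde{\beta}_{j})}{2\rho(\tilde{\beta}_{j})}=1-\frac{1}{n+1}=\frac{n}{n+1}.
\]
For $n$ even at $j=n/2$ (and symmetrically at $j=n/2+1$) one gets $\gamma(\tilde{\beta}_{j})=n/4$ and $2\rho(\tilde{\beta}_{j})=n(n+2)/4$, so the ratio equals $(n+1)/(n+2)$.

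Substituting these two values of $m_{1}/m_{1}^{\prime}$ into the master formula yields
\[
\kappa(\tau)=2n\left(1-\frac{n}{n+1}\right)\kappa_{0}=\frac{2n}{n+1}\kappa_{0}\quad(n\text{ odd}),\qquad \kappa(\tau)=2n\left(1-\frac{n+1}{n+2}\right)\kappa_{0}=\frac{2n}{n+2}\kappa_{0}\quad(n\text{ even}),
\]
which are exactly the exponents claimed in parts (1) and (2) respectively; the logarithmic power $(\log T)^{q}$ and the threshold $T_{0}$ are already produced by the corollary, so no additional work is needed there.

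The main technical point, rather than any obstacle of a deep nature, is to verify that the middle index really is the optimal choice among all potential common minimizers in $I(\lambda)\cap I^{\prime}(\lambda)$. This is because the quantity $\psi(\tilde{\beta}_{j})/2\rho(\tilde{\beta}_{j})=1-\gamma(\tilde{\beta}_{j})/(j(n+1-j))$ is minimized exactly when the ratio $\gamma(\tilde{\beta}_{j})/(j(n+1-j))$ is largest, and a short case check using Oh's explicit formula shows that this occurs precisely at $j=(n+1)/2$ for $n$ odd and at $j\in\{n/2,n/2+1\}$ for $n$ even. Thus the hypothesis that one of these middle indices lies in $I(\lambda)\cap I^{\prime}(\lambda)$ is exactly what is required to plug the optimal ratio into the master formula, and the theorem follows.
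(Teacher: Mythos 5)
Your proposal is correct and follows essentially the same route as the paper: you observe that for any common minimizing index $i\in I(\lambda)\cap I^{\prime}(\lambda)$ the ratio $m_{1}/m_{1}^{\prime}$ collapses to $\psi(\tilde{\beta}_{i})/2\rho(\tilde{\beta}_{i})$, compute $\gamma(\tilde\beta_j)$ and $2\rho(\tilde\beta_j)=j(n+1-j)$ at the middle index from Oh's explicit formula, and substitute into the master error estimate \eqref{eq:RealEstimate}. The paper organizes the last step through Proposition~\ref{prop:m1/m1'_lower_bound} and a separate minimization Proposition, whereas you fold the computation directly into the proof, but the underlying calculation is identical; also note that the ``optimality among common minimizers'' remark in your closing paragraph is not actually needed, since the hypothesis already pins down the index and every $i\in I\cap I^{\prime}$ yields the same value of $m_{1}/m_{1}^{\prime}$.
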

Thus when $n$ is odd the exponent associated with a dominant weight $\lambda$  is $\kappa=2\frac{n}{n+1}\kappa_{0}$, and 
when $n$ is even the exponent is $\kappa=2\frac{n}{n+2}\kappa_{0}$, provided that $\lambda$ satisfies the conditions stated above. In both cases, these values are the largest exponent that our method provides, and the exponents are nearly twice as large as $\kappa_0$. 

\begin{proof}

The largest exponent is achieved when the ratio ${\displaystyle \frac{m_{1}}{m_{1}^{\prime}}}$
is minimized. For the universal pointwise bound associated with $\gamma$, setting $\psi=2\rho-\gamma$ 
 by Proposition \ref{prop:m1/m1'_lower_bound} we have  ${\displaystyle {\displaystyle \frac{m_{1}}{m_{1}^{\prime}}}\geq\min_{1\leq i\leq n}\frac{\psi\left(\tilde{\beta}_{i}\right)}{2\rho\left(\tilde{\beta}_{i}\right)}}$.
To find conditions for which ${\displaystyle {\displaystyle \frac{m_{1}}{m_{1}^{\prime}}}=\min_{1\leq i\leq n}\frac{\psi\left(\tilde{\beta}_{i}\right)}{2\rho\left(\tilde{\beta}_{i}\right)}}$, we will use the following.  
\begin{prop}\label{minimum}
${\displaystyle \min_{1\leq i\leq n}\frac{\psi\left(\tilde{\beta}_{i}\right)}{2\rho\left(\tilde{\beta}_{i}\right)}=\begin{cases}
\frac{n}{n+1} & n\mbox{ is odd}\\
\frac{n+1}{n+2} & n\mbox{ is even}
\end{cases}}$. Moreover for n odd the minimum is attained at $\frac{n+1}{2}$,
and for n even at $\left\{ \frac{n}{2},\frac{n}{2}+1\right\} $. 
\end{prop}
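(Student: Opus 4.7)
The plan is to carry out a direct computation, exploiting the relation $\alpha_i(\tilde\beta_j)=\delta_{ij}$ together with the formula $2\rho(\tilde\beta_j)=j(n+1-j)$ already recorded in the proof of the previous proposition.

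First I would evaluate $\gamma$ on each dual basis vector $\tilde\beta_j$ using Oh's explicit expression. For $n$ odd, the formula immediately gives $\gamma(\tilde\beta_j)=\tfrac{j}{2}$ when $1\le j\le (n-1)/2$ and $\gamma(\tilde\beta_j)=\tfrac{n+1-j}{2}$ when $(n+1)/2\le j\le n$, which in both ranges equals $\tfrac{1}{2}\min(j,n+1-j)$. For $n$ even a case split at $j=n/2+1$ yields $\gamma(\tilde\beta_{n/2+1})=n/4=\tfrac12\min(n/2+1,n/2)$, and the other two ranges are handled identically. Thus in both parities we obtain the clean uniform formula
\[
\gamma(\tilde\beta_j)=\tfrac12\min(j,n+1-j),\qquad 1\le j\le n.
\]

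Next, since $\psi=2\rho-\gamma$ and $2\rho(\tilde\beta_j)=j(n+1-j)=\min(j,n+1-j)\cdot\max(j,n+1-j)$, I would factor
\[
\frac{\psi(\tilde\beta_j)}{2\rho(\tilde\beta_j)}\;=\;1-\frac{\gamma(\tilde\beta_j)}{2\rho(\tilde\beta_j)}\;=\;1-\frac{1}{2\max(j,n+1-j)}.
\]
Minimising this expression over $1\le j\le n$ is then equivalent to minimising $\max(j,n+1-j)$. The integer function $j\mapsto\max(j,n+1-j)$ is convex and symmetric about $j=(n+1)/2$, so its minimum is attained at $j=(n+1)/2$ when $n$ is odd (value $(n+1)/2$) and at the two points $j=n/2$ and $j=n/2+1$ when $n$ is even (value $(n+2)/2$).

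Substituting these minima into the displayed ratio yields $1-\tfrac{1}{n+1}=\tfrac{n}{n+1}$ in the odd case and $1-\tfrac{1}{n+2}=\tfrac{n+1}{n+2}$ in the even case, along with the claimed locations of the minimising indices. The only real obstacle is the asymmetric middle term in Oh's functional for even $n$, but as noted it collapses into the same formula $\tfrac12\min(j,n+1-j)$, so the whole argument reduces to elementary arithmetic once that uniform expression has been extracted.
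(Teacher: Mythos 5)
Your proof is correct and follows essentially the same route as the paper: both reduce the ratio to $1 - \tfrac{1}{2\max(j,\,n+1-j)}$ and then minimise the elementary integer function $\max(j,\,n+1-j)$ over $1\le j\le n$. The only stylistic difference is that the paper invokes the symmetry $j\mapsto n+1-j$ to restrict to $j\le (n+1)/2$ and only spells out the odd case, whereas you package both parities uniformly via $\gamma(\tilde\beta_j)=\tfrac12\min(j,n+1-j)$ and $2\rho(\tilde\beta_j)=\min(j,n+1-j)\cdot\max(j,n+1-j)$; this is a clean presentation that handles Oh's middle term for even $n$ without a separate case split.
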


\begin{proof} %of Proposition \ref{minimum}. }
We demonstrate this for n odd, and a similar argument applies to the case when $n$ is even.
First note that $\psi\left(\tilde{\beta_{j}}\right)\mbox{ and }\rho\left(\tilde{\beta_{j}}\right)$
have the symmetry, $j\mapsto n+1-j$.  Therefore
\begin{eqnarray*}
\min_{1\leq i\leq n}\frac{\psi\left(\tilde{\beta}_{i}\right)}{2\rho\left(\tilde{\beta}_{i}\right)} & = & \min_{1\leq i\leq\frac{n+1}{2}}\frac{\psi\left(\tilde{\beta}_{i}\right)}{2\rho\left(\tilde{\beta}_{i}\right)}\\
 & = & 1+\min_{1\leq i\leq\frac{n+1}{2}}\frac{-\frac{1}{2}i}{i\left(n+1-i\right)}\\
 & = & 1-\frac{1}{2}\max_{1\leq i\leq\frac{n+1}{2}}\frac{1}{n+1-i}\\
 & = & 1-\frac{1}{n+1}=\frac{n}{n+1},
\end{eqnarray*}
and it is can be easily seen that the minimum is attained at ${\displaystyle \frac{n+1}{2}}$. 
\end{proof}

Resuming the proof of Theorem \ref{thm:Best_estimate}, we conclude that in the event that $n$ is odd, if $s=\frac{n+1}{2}\in I\cap I^{\prime}$
then, 
\[
\frac{m_{1}}{m_{1}^{\prime}}=\frac{\lambda\left(\tilde{\beta}_{s}\right)}{2\rho\left(\tilde{\beta}_{s}\right)}\frac{\psi\left(\tilde{\beta}_{s}\right)}{\lambda\left(\tilde{\beta}_{s}\right)}=\frac{\psi\left(\tilde{\beta}_{s}\right)}{2\rho\left(\tilde{\beta}_{s}\right)}=\min_{1\leq i\leq n}\frac{\psi\left(\tilde{\beta}_{i}\right)}{2\rho\left(\tilde{\beta}_{i}\right)}=\frac{n}{n+1}.
\]
This implies that the associated exponent will be $\kappa=2n\left(1-\frac{m_{1}}{m_{1}^{\prime}}\right)\kappa_{0}=\frac{2n}{n+1}\kappa_{0}$. 
This exponent is greater then $\kappa_{0}$ if $n$ is odd and $n \ge 3$. Similarly we handle the
case that $n$ is even.
\end{proof}

\subsection{Examples of admissible dominant weights}
Let us now show that there exist dominant weights for which the improved error estimate is achieved. Namely we give some examples for dominant weights that meet the conditions
of theorem \ref{thm:Best_estimate}.
\begin{thm}
Let $n$ be odd. Then every dominant weight $\mu$ belonging to the set 
$$W_n=\left\{ \lambda_{i}+\lambda_{n+1-i}:i\in\left\{ 1,...,\lfloor\frac{n+1}{4}\rfloor\right\} \right\}$$
satisfies the condition of Theorem  \ref{thm:Best_estimate}.
\end{thm}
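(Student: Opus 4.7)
The plan is to check, for each $\mu=\lambda_i+\lambda_{n+1-i}$ with $1\le i\le\lfloor(n+1)/4\rfloor$, that the index $s:=(n+1)/2$ (an integer since $n$ is odd) lies in both $I(\mu)$ and $I^{\prime}(\mu)$, and then invoke Theorem \ref{thm:Best_estimate}. The required input is an explicit computation of the pairings $\mu(\tilde{\beta}_j)$ and $\psi(\tilde{\beta}_j)$ in terms of $i$, $j$, and $n$.

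First, I would compute $\lambda_i(\tilde{\beta}_j)=\min(i,j)(n+1-\max(i,j))/(n+1)$ from the standard description of $\tilde{\beta}_j$ as a diagonal matrix in $\ma$, and conclude that $\mu(\tilde{\beta}_j)$ is piecewise linear in $j$: it equals $j$ for $1\le j\le i$, equals the constant $i$ for $i\le j\le n+1-i$, and equals $n+1-j$ for $n+1-i\le j\le n$. Combined with $2\rho(\tilde{\beta}_j)=j(n+1-j)$, the ratio $\mu(\tilde{\beta}_j)/(2\rho(\tilde{\beta}_j))$ becomes $1/(n+1-j)$ on the first range, $i/(j(n+1-j))$ on the middle range, and $1/j$ on the third. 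The AM--GM bound $j(n+1-j)\le((n+1)/2)^{2}$ shows that the middle-range minimum is attained at $j=s$ with value $4i/(n+1)^{2}$, while the outer minima both equal $1/(n+1-i)$; the further AM--GM inequality $4i(n+1-i)\le(n+1)^{2}$ yields $4i/(n+1)^{2}\le 1/(n+1-i)$, so $s\in I(\mu)$.

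For $s\in I^{\prime}(\mu)$ I would use $\gamma(\tilde{\beta}_j)=\min(j,n+1-j)/2$ (immediate from the explicit formula for $\gamma$ together with $\alpha_k(\tilde{\beta}_j)=\delta_{kj}$), whence $\psi(\tilde{\beta}_j)=j(n+1-j)-\min(j,n+1-j)/2$. The ratios $\mu(\tilde{\beta}_j)/\psi(\tilde{\beta}_j)$ simplify to $2/(2n+1-2j)$ on the first range, $2i/(j(2n+1-2j))$ on the middle range up to $s$ (and its mirror beyond $s$), and $2/(2j-1)$ on the last range; at $j=s$ the value is $4i/(n(n+1))$. On the middle range one verifies by completing the square that $2j(2n+1-2j)$ is strictly maximized over integer $j\in\{i,\ldots,s\}$ at $j=s$ with maximum value $n(n+1)$; on the outermost range the relevant minimum is $2/(2n-1)$, and the inequality $2/(2n-1)\ge 4i/(n(n+1))$ reduces to $i\le n(n+1)/(2(2n-1))$, which in turn follows from $i\le(n+1)/4<n(n+1)/(2(2n-1))$ (equivalent to the elementary $4n>4n-2$). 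Hence $s\in I^{\prime}(\mu)$.

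Combining these two facts and applying Theorem \ref{thm:Best_estimate} yields the improved error estimate for every $\mu\in W_n$. The computation is essentially routine, but its main delicacy is bookkeeping across the three $j$-ranges and ensuring that the integer maximizers coincide with the continuous AM--GM optimizers; this is exactly where the restriction $i\le\lfloor(n+1)/4\rfloor$ is sharp, being the weakest hypothesis that forces both the middle-range and the outer-range competitors to be beaten by the value of the ratio at $j=s$.
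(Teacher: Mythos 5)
Your route differs from the paper's in a substantive way: you verify $s\in I(\mu)$ and $s\in I'(\mu)$ as two separate chains of inequalities, whereas the paper first notes that since $\psi(\tilde\beta_s)/(2\rho(\tilde\beta_s))\le \psi(\tilde\beta_j)/(2\rho(\tilde\beta_j))$ for all $j$ (Proposition~\ref{minimum}), the single containment $s\in I'(\mu)$ already implies $s\in I(\mu)$, so only one family of inequalities needs checking. Your extra check is exactly where a genuine error appears.

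For $s\in I(\mu)$ you assert that the minimum of $\mu(\tilde\beta_j)/(2\rho(\tilde\beta_j))=1/(n+1-j)$ over the first range $1\le j\le i$ equals $1/(n+1-i)$. But $1/(n+1-j)$ is \emph{increasing} in $j$, so its minimum over that range is $1/n$, attained at $j=1$ (and symmetrically $1/n$ at $j=n$ on the third range); the value $1/(n+1-i)$ is the maximum, not the minimum. Consequently the AM--GM inequality $4i(n+1-i)\le(n+1)^2$ yields only the comparison $4i/(n+1)^2\le 1/(n+1-i)$, which is strictly weaker than what is required, namely $4i/(n+1)^2\le 1/n$. The latter is equivalent to $4in\le(n+1)^2$, which is indeed true under $i\le\lfloor(n+1)/4\rfloor$ (since $4in\le n(n+1)<(n+1)^2$), so the conclusion $s\in I(\mu)$ is correct; but the argument you give does not establish it, and the needed inequality must be checked directly rather than deduced from the quoted AM--GM bound.

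The $s\in I'(\mu)$ computation reaches the right conclusion, but one point should be made precise: the vertex of $j\mapsto j(2n+1-2j)$ is at $j^*=(2n+1)/4$, which lies strictly to the left of $s=(n+1)/2$ and is not an integer. The integer maximizer on $\{i,\ldots,s\}$ is nonetheless $j=s$, because $s-j^*=1/4<3/4=j^*-(s-1)$; this short parity observation is what ``completing the square'' must be supplemented with. (The paper's own proof has the same implicit step, so I would not single this out as a gap, but your closing remark that the integer maximizers coincide with the continuous optimizers is not literally correct here.)
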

\begin{proof}
We need to show that for weights $\mu\in W_n$, $\frac{n+1}{2}\in I\cap I^{\prime}$.
Setting $s=\frac{n+1}{2}$ we have to show,\textcolor{black}{{} }
\[
\frac{\mu\left(\tilde{\beta}_{s}\right)}{2\rho\left(\tilde{\beta}_{s}\right)}\leq\frac{\mu\left(\tilde{\beta}_{j}\right)}{2\rho\left(\tilde{\beta}_{j}\right)}\mbox{ and }\frac{\mu\left(\tilde{\beta}_{s}\right)}{\psi\left(\tilde{\beta}_{s}\right)}\leq\frac{\mu\left(\tilde{\beta}_{j}\right)}{\psi\left(\tilde{\beta}_{j}\right)}\,,\,\forall j\neq s.
\]
\textcolor{black}{as noted above}
\[
\frac{\psi\left(\tilde{\beta}_{s}\right)}{2\rho\left(\tilde{\beta}_{s}\right)}\leq\frac{\psi\left(\tilde{\beta}_{j}\right)}{2\rho\left(\tilde{\beta}_{j}\right)}\quad\forall j\neq s\Longrightarrow\frac{\psi\left(\tilde{\beta}_{s}\right)}{\psi\left(\tilde{\beta}_{j}\right)}\leq\frac{2\rho\left(\tilde{\beta}_{s}\right)}{2\rho\left(\tilde{\beta}_{j}\right)}\quad\forall j\neq s.
\]
Hence to show that $\mu\in W_n $ satisfies the property stated in Theorem  \ref{thm:Best_estimate} it is enough to show,
\begin{equation}
\frac{\mu\left(\tilde{\beta}_{s}\right)}{\psi\left(\tilde{\beta}_{s}\right)}\leq\frac{\mu\left(\tilde{\beta}_{j}\right)}{\psi\left(\tilde{\beta}_{j}\right)}\quad\forall j\neq s%\mbox{ and }\mu\in W_n.\label{eq:best_approx_eq-1}
\end{equation}
Recall that the fundamental weights satisfy $\lambda_{i}\left(\tilde{\beta_{j}}\right)=\left(C_{n}^{-1}\right)_{i,j}$
where $C_{n}$ is the Cartan matrix of the root system. Hence if $\mu=\sum_{k=1}^{n}q_{k}\lambda_{k}$,
then
\[
C_{n}^{-1}\begin{pmatrix}q_{1}\\
\vdots\\
q_{n}
\end{pmatrix}=\begin{pmatrix}\mu\left(\tilde{\beta_{1}}\right)\\
\vdots\\
\mu\left(\tilde{\beta_{n}}\right)
\end{pmatrix}.
\]
Using the above equation it can be calculated that 
\[
\left(\lambda_{1}+\lambda_{n}\right)\left(\tilde{\beta_{j}}\right)=\left(1,1,...,1\right),
\]
\[
\left(\lambda_{2}+\lambda_{n-1}\right)\left(\tilde{\beta_{j}}\right)=\left(1,2,2,...,2,1\right),
\]
and in general
\[
\left(\lambda_{i}+\lambda_{n+1-i}\right)\left(\tilde{\beta_{j}}\right)=\left(1,2,3,...,\underbrace{i}_{\mbox{i'th entry}},i,...,\underbrace{i}_{\mbox{n+1-i'th entry}},...,2,1\right).
\]
Next let us recall that by definition of $\gamma$ 
\[
\psi=2\rho-\gamma=\sum_{i=1}^{\frac{n-1}{2}}i\left(n+\frac{1}{2}-i\right)\alpha_{i}+\sum_{i=\frac{n+1}{2}}^{n}\left(n+1-i\right)\left(i-\frac{1}{2}\right)\alpha_{i}.
\]
Recall that $\psi$ has the symmetry given by $\psi\left(\tilde{\beta_{j}}\right)=\psi\left(\tilde{\beta}_{n+1-j}\right)$
and note that the same is true for every weight in $W_n$. Hence it is 
enough to show that $\frac{\mu\left(\tilde{\beta}_{s}\right)}{\psi\left(\tilde{\beta}_{s}\right)}\leq\frac{\mu\left(\tilde{\beta}_{j}\right)}{\psi\left(\tilde{\beta}_{j}\right)}\quad\forall j<s.$
Let $\mu=\lambda_{i}+\lambda_{n+1-i}$. Then
\[
\frac{\mu\left(\tilde{\beta}_{j}\right)}{\psi\left(\tilde{\beta}_{j}\right)}=\begin{cases}
\frac{j}{j\left(n+\frac{1}{2}-j\right)} & j<i\\
\frac{i}{j\left(n+\frac{1}{2}-j\right)} & j\geq i
\end{cases}=\begin{cases}
\frac{1}{\left(n+\frac{1}{2}-j\right)} & j<i\\
\frac{i}{j\left(n+\frac{1}{2}-j\right)} & j\geq i
\end{cases}.
\]
Thus
\begin{eqnarray*}
\min_{1\leq j\leq s}\frac{\mu\left(\tilde{\beta}_{j}\right)}{\psi\left(\tilde{\beta}_{j}\right)} & = & \min\left(\min_{1\leq j<i}\left(\frac{1}{\left(n+\frac{1}{2}-j\right)}\right),\min_{i\leq j\leq s}\left(\frac{i}{j\left(n+\frac{1}{2}-j\right)}\right)\right)\\
 & = & \min\left(\frac{1}{n-\frac{1}{2}},\frac{i}{s\left(s-\frac{1}{2}\right)}\right).
\end{eqnarray*}
We would like the minimum to be achieved at $j=s$, and this occurs when 
\[
\frac{i}{s\left(s-\frac{1}{2}\right)}\leq\frac{1}{n-\frac{1}{2}}\iff i\leq\frac{n}{n-\frac{1}{2}}\frac{n+1}{4}.
\]
This obviously holds for $i<\frac{n+1}{4}$, and so we have shown
that for any $\mu\mbox{ in }W_n$ the condition stated in Theorem  \ref{thm:Best_estimate} %equation \eqref{eq:best_approx_eq-1}
(for $n$ odd) holds. Therefore the balls defined by the representations associated with these dominant weights give rise to the an error exponent in the lattice point counting problem stated in Theorem  \ref{thm:Best_estimate}.\end{proof}
\begin{rem}
A similar result holds for $n$ even, namely for all dominant weights 
 $$\mu\in\left\{ \lambda_{i}+\lambda_{n+1-i}:i\in\left\{ 1,...,\lfloor\frac{n}{4}\rfloor\right\} \right\} =W_{n}$$
condition 2 of theorem
\ref{thm:Best_estimate} is satisfied with $\frac{n}{2}+1\in I\cap I^{\prime}$. 
\end{rem}

Finally, let us note that $\lambda_{1}+\lambda_{n}=\beta$ is the highest weight of
the adjoint representation, and this completes the proof of  Theorem \ref{thm:adjoin_estimate}.\qed

%for all $\mu\in W$, this will be important later.

\subsection{Euclidean norm balls associated with an arbitrary irreducible representation}

The error estimate given by Theorem  \ref{thm:Best_estimate} is the best that our method can provide. As stated in Theorem  \ref{thm:adjoin_estimate}, it arises for the balls associated with the adjoint representation, among others. 
Let us now note that for other irreducible representations it is still possible to improve the error estimate beyond the exponent $\kappa_0$ established in \cite{DRS}. We will prove such an improvement for irreducible representations $\tau$ when the highest weight $\lambda\in \Lambda^+$ 
	belongs to the following set:
	$$
	\Lambda^+_*= \left\{\lambda\in\Lambda^+:\, \exists i\ne 1,n:\, \frac{\lambda(\tilde \beta_i)}{\psi(\tilde \beta_i)}\le \frac{\lambda(\tilde \beta_j)}{\psi(\tilde \beta_j)}\; \hbox{for $j=1,\ldots,n$}\right\}.
	$$
	We note that this set is a union of cones
	$$
	\Lambda^+_*=\bigcup_{i=2}^{n-1}\Lambda^+_i
	$$
	where
		$$
		\Lambda^+_i= \left\{\lambda\in\Lambda^+:\, \frac{\lambda(\tilde \beta_i)}{\psi(\tilde \beta_i)}\le \frac{\lambda(\tilde \beta_j)}{\psi(\tilde \beta_j)}\; \hbox{for $j=1,\ldots,n$}\right\}.
		$$

	\begin{thm}
		Let $G=\hbox{\rm SL}(n+1,\mathbb{R})$ with $n\ge 2$, let $\Gamma$ be any lattice subgroup, and $T\ge T_0$.
		Then for irreducible represenations $\tau$ with highest weight $\lambda\in \Lambda^+_i$, 
		$$
		\frac{|B_T^\tau\cap \Gamma|}{\hbox{\rm vol}(B_T^\tau)}=1+O(\hbox{\rm vol}(B_T^\tau)^{\sigma_i\kappa_0}(\log T)^q)
		$$
		%for some fixed $q\in\mathbb{N}$, 
		where $\sigma_i=\min(\frac{n}{i},\frac{n}{n+1-i})$.
		Here $q$ and $T_0$ are positive numbers which depends on $n$ but not on the lattice, and can be made explicit. 

	\end{thm}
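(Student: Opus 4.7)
The plan is to apply the general error estimate \eqref{eq:RealEstimate} with the universal pointwise bound $F_\gamma$ from \S\ref{sub:Pointwise-uniform-bounds}, and then to bound the exponent $\kappa(\tau)=2n(1-m_1/m_1')\kappa_0$ from below when $\lambda\in\Lambda^+_i$. Unlike in Theorem~\ref{thm:Best_estimate}, we now assume only that $i\in I'(\lambda)$ and not that $i\in I(\lambda)\cap I'(\lambda)$, so we will obtain only an inequality for $\kappa(\tau)$, not equality. This is exactly why the resulting exponent $\sigma_i\kappa_0$ at the central indices recovers (but does not exceed) the optimal exponent of Theorem~\ref{thm:Best_estimate}.

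The first step mirrors the proof of Proposition~\ref{prop:m1/m1'_lower_bound}. By the definition of $\Lambda^+_i$, the index $i$ attains the minimum in $m_1'$, so $m_1'=\lambda(\tilde\beta_i)/\psi(\tilde\beta_i)$; on the other hand $m_1\leq\lambda(\tilde\beta_i)/(2\rho(\tilde\beta_i))$ simply because $m_1$ is a minimum over all $j$. Dividing these two relations and using $\psi=2\rho-\gamma$ yields
\[
\frac{m_1}{m_1'}\leq\frac{\psi(\tilde\beta_i)}{2\rho(\tilde\beta_i)},\qquad 1-\frac{m_1}{m_1'}\geq\frac{\gamma(\tilde\beta_i)}{2\rho(\tilde\beta_i)}.
\]

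The second step is an explicit root-system computation. Since $\alpha_k(\tilde\beta_j)=\delta_{kj}$, the number $\gamma(\tilde\beta_j)$ is just the coefficient of $\alpha_j$ in $\gamma$, and reading off the definition of $\gamma$ in \S\ref{sub:Pointwise-uniform-bounds} one checks, uniformly in the parity of $n$, that $\gamma(\tilde\beta_j)=\min(j,n+1-j)/2$; the only point requiring attention is the exceptional middle coefficient $(n/2)\alpha_{n/2+1}$ when $n$ is even, where one verifies directly that $n/4=\min(n/2+1,n/2)/2$. Combined with the standard identity $2\rho(\tilde\beta_j)=j(n+1-j)$, this gives
\[
\frac{n\,\gamma(\tilde\beta_i)}{\rho(\tilde\beta_i)}=\frac{n}{\max(i,n+1-i)}=\min\!\left(\frac{n}{i},\frac{n}{n+1-i}\right)=\sigma_i,
\]
so $\kappa(\tau)\geq\sigma_i\kappa_0$, which fed into \eqref{eq:RealEstimate} delivers the claimed error term with exponent $-\sigma_i\kappa_0$ on the volume. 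The main (and essentially the only) obstacle is bookkeeping: checking the uniform formula for $\gamma(\tilde\beta_j)$ case-by-case, with particular care at the exceptional central coefficient in the even case. Beyond that, the result is a direct packaging of the mechanism already developed in the earlier sections.
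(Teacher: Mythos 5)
Your proof is correct and follows essentially the same route as the paper: both reduce the problem to bounding $m_1/m_1'$ via the observation that $\lambda\in\Lambda^+_i$ forces $m_1'=\lambda(\tilde\beta_i)/\psi(\tilde\beta_i)$ while $m_1\leq\lambda(\tilde\beta_i)/(2\rho(\tilde\beta_i))$, then evaluate $\gamma(\tilde\beta_i)/(2\rho(\tilde\beta_i))$ from the explicit formula for Oh's functional and feed the resulting lower bound for $\kappa(\tau)$ into \eqref{eq:RealEstimate}. The only cosmetic difference is that the paper invokes the symmetry $i\mapsto n+1-i$ to reduce to $i\leq(n+1)/2$ before evaluating the coefficient of $\alpha_i$ in $\gamma$, whereas you establish the uniform formula $\gamma(\tilde\beta_j)=\tfrac{1}{2}\min(j,n+1-j)$ (correctly verifying the exceptional middle coefficient when $n$ is even) and compute directly; both yield $\kappa(\tau)\geq\sigma_i\kappa_0$, and you also correctly note that the exponent in the error term should be $-\sigma_i\kappa_0$ (the paper's statement has a sign typo).
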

	
	\begin{proof}
		We recall that
		$$
		\frac{|B_T^\tau\cap \Gamma|}{\hbox{\rm vol}(B_T^\tau)}=1+O\left(\hbox{\rm vol}(B_T^\tau)^{-2n\left(1-\frac{m_1}{m_1'}\right)\kappa_0}(\log T)^q\right)
		$$
		where 
		$$
		m_1=\min_j \frac{\lambda(\tilde \beta_j)}{2\rho(\tilde \beta_j)}\quad\hbox{and}\quad
		m_1'=\min_j \frac{\lambda(\tilde \beta_j)}{\psi(\tilde \beta_j)}.
		$$
		Since $\lambda\in \Lambda^+_i$,
		\begin{align*}
		\frac{m_1}{m_1'}\le \frac{\lambda(\tilde \beta_i)}{2\rho(\tilde \beta_i)}\cdot 
		\left(\frac{\lambda(\tilde \beta_i)}{\psi(\tilde \beta_i)}\right)^{-1}
		=\frac{\psi(\tilde \beta_i)}{2\rho(\tilde \beta_i)}=1-\frac{\gamma(\tilde \beta_i)}{2\rho(\tilde \beta_i)}.
		\end{align*}	
		The last expression is symmetric with respect to $i\mapsto n+1-i$.
		so that we may assume that $i\le (n+1)/2$. Then for $i\le (n+1)/2$,
		$$
		\frac{m_1}{m_1'}\le 1-\frac{i/2}{i(n+1-i)}=1-\frac{1}{2(n+1-i)}.
		$$
		This implies the theorem.
	\end{proof}
	\begin{rem}
	\begin{enumerate}
\item Note that the theorem gives a non-trivial improvement over the bound $\kappa_0$ provided $i\ne 1,n$.
\item The best improvement is achieved when $i=(n+1)/2$ for odd $n$ and 
	when $i=n/2$ or $i=n/2+1$ for even $n$.
	\item 	It follows from Theorem 10 that the cones $\Lambda^+_{(n+1)/2}$ for odd $n$, and 
	$\Lambda^+_{n/2}$ and $\Lambda^+_{n/2+1}$ for even $n$ are not empty.
	\end{enumerate}
\end{rem}

\end{document}